\newtheorem{lemma}{Lemma}[section]
\newtheorem{thm}[lemma]{Theorem}
\newtheorem{prop}[lemma]{Proposition}
\newtheorem{cor}[lemma]{Corollary}
\newtheorem{conj}[lemma]{Conjecture}
\theoremstyle{definition}
\newtheorem{defn}[lemma]{Definition}
\theoremstyle{definition}
\newtheorem*{claim}{Claim}
\definecolor{darkgreen}{cmyk}{1,0,1,.2}
\newcommand{\R} {\ensuremath {\mathbb{R}}}
\newcommand{\Z} {\ensuremath {\mathbb{Z}}}
\newcommand{\Mod}{\mathrm{Mod}}
\newcommand{\Out}{\mathrm{Out}}
\newcommand{\C}{\mathcal{C}}
\newcommand{\matE} {\ensuremath {\mathbb{E}}}
\newcommand{\matP} {\ensuremath {\mathbb{P}}}
\address{Department of Mathematics, ETH Zurich, 8092 Zurich, Switzerland}
\email{sisto@math.ethz.ch}
\address{Department of Mathematics, Temple University, Philadelphia, PA, USA}
\email{samuel.taylor@temple.edu}
\begin{document}

\title[Largest projections and shortest curves]{Largest projections for random walks and \linebreak shortest curves in random mapping tori}
\author{Alessandro Sisto and Samuel J. Taylor}
\maketitle

\begin{abstract}
We show that the largest subsurface projection distance between a marking and its image under the $n$th step of a random walk grows logarithmically in $n$, with probability approaching $1$ as $n \to \infty$. Our setup is general and also applies to (relatively) hyperbolic groups and to $\Out(F_n)$.

We then use this result to prove Rivin's conjecture that for a random walk $(w_n)$ on the mapping class group, the shortest geodesic in the hyperbolic mapping torus $M_{w_n}$ has length on the order of $1/ \log^2(n)$.
\end{abstract}

\section{Introduction}
In \cite{Maher}, Maher proved that a random walk generated by a nonelementary measure on the mapping class group $\Mod(S)$ has \emph{positive drift} with respect to the action $\Mod(S) \curvearrowright \C(S)$ on the curve complex $\C(S)$ of an orientable surface $S$. Informally, this means that for a fixed curve $\alpha \in \C(S)$, the displacement $d_{\mathcal C(S)}(\alpha,w_n\alpha)$ between $\alpha$ and its image under the $n$th step of the random walk $(w_n)$ grows linearly in $n$ with probability approaching $1$ as $n \to \infty$. Generalizations were made to actions of countable groups on hyperbolic spaces by Calegari--Maher \cite{calegari2015statistics} and Maher--Tiozzo \cite{MaherTiozzo}, see also \cite{MathieuSisto}.

In this note, we give finer information about the behavior of random walks on mapping class groups, by studying largest subsurface projections. Subsurface projections are very useful for example in view of the distance formula \cite{MM2} for mapping class groups, as well as their connections with the geometry of hyperbolic $3$--manifolds, see e.g. \cite{Minsky-bounded_geometry} and Section \ref{sec:shortest} below. We show that the largest distance between subsurface projection of $1$ and $w_n$ grows logarithmically in $n$ with probability approaching $1$ as $n \to \infty$. Since curve complex distance is ``stalled'' while progress is being made in a subsurface, this complements Maher's result. 

In fact, our methods apply to several other examples, which we list in the theorem below.

\begin{thm}\label{thm:examples}
 Consider one of the following setups:
 \begin{itemize}
 \item Let $G$ be hyperbolic group, $Q$ an infinite index quasiconvex subgroup, and fix a finite generating set $S$ for $G$ which contains a generating set for $Q$. Let $\{\mathcal C(Y)\}_{Y\in\mathcal S}$ be the set of induced subgraphs of left cosets of $Q$ in the Cayley graph $\mathrm{Cay}_S(G)$. For $Y\in\mathcal S$, let $\pi_Y:G\to\mathcal C(Y)$ denote a closest point projection in $\mathrm{Cay}_S(G)$.
  \item Let $G$ be the mapping class group of a closed connected oriented surface of genus at least $2$, and let $\{\mathcal C(Y)\}_{Y\in\mathcal S}$ be the set of curve complexes of either all proper subsurfaces or all proper subsurfaces of a given topological type. For $Y\in\mathcal S$, let $\pi_Y:G\to \mathcal C(Y)$ denote the subsurface projection, i.e. we let $\pi_Y(g)$ be the subsurface projection to $Y$ of $g\lambda$ for a fixed marking $\lambda$ \cite{MM2}.
  \item Let $G$ denote $\Out(F_n)$, $n\geq 3$, and let $\{\mathcal C(Y)\}_{Y\in\mathcal S}$ be the set of free factor complexes of either all proper free factors of $F_n$ or all proper free factors of a given rank $\ge 2$. For $Y\in\mathcal S$, let $\pi_Y:G\to\mathcal C(Y)$ denote the subfactor projection, i.e. $\pi_Y(g)$ is the subfactor projection to the factor complex of $Y$ of $g\lambda$ for a fixed marked graph $\lambda$ \cite{BFproj, Taylproj}.
  
  \item Let $G$ be hyperbolic relative to its proper subgroups $H_1,\dots,H_n$, each containing an undistorted element\footnote{We call an $g$ undistorted if $n\mapsto g^n$ is a quasi-isometric embedding. One can weaken the assumption to $H_i$ being infinite, but that requires adjustments to the proof, and we opted to keep the proof as simple and as uniform as possible.}. Fix a finite generating set $S$ for $G$ which contains generating sets for each $H_i$,  and let $\{\mathcal C(Y)\}_{Y\in\mathcal S}$ be the set of induced subgraphs of
left cosets of $H_1$ in $\mathrm{Cay}_S(G)$.
 For $Y\in\mathcal S$, let $\pi_Y:G\to\mathcal C(Y)$ denote a closest point projection in $\mathrm{Cay}_S(G)$.
 \end{itemize}

 Let $\mu$ be a symmetric measure whose support is finite and generates $G$, and let $(w_n)$ denote the random walk driven by $\mu$. Then there exists $C$ so that
 $$\matP\left(\sup_{Z\in\mathcal S} d_{\C(Z)}(\pi_Z(1),\pi_Z(w_n))\in [C^{-1} \log n, C\log n]\right)\to 1,$$
 as $n$ tends to infinity. 
\end{thm}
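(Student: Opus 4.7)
The plan is to treat all four setups in parallel by isolating their common geometry and then prove the matching bounds by a deviation-plus-union-bound argument (upper) and a ``long run'' argument (lower). In each case $G$ acts on a Gromov-hyperbolic space $X$ (Cayley graph, curve complex, free factor complex, or coned-off Cayley graph) with positive linear drift for $(w_n)$, each $\mathcal C(Y)$ is itself hyperbolic, and the projections $\pi_Y$ satisfy both the Bounded Geodesic Image Theorem (BGIT)---large $d_{\mathcal C(Y)}(\pi_Y(g),\pi_Y(h))$ forces any $X$-geodesic between $g\lambda$ and $h\lambda$ to enter a uniformly bounded neighbourhood of the ``shadow'' of $Y$ in $X$---and the Behrstock/BBF projection axioms.

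For the upper bound, I would first establish the single-projection tail estimate
$$\matP\bigl(d_{\mathcal C(Y)}(\pi_Y(1),\pi_Y(w_n)) \geq k\bigr) \leq A e^{-ak},$$
with $A, a > 0$ independent of $Y\in\mathcal S$ and $n$. This follows from Mathieu--Sisto type deviation inequalities for random walks on hyperbolic spaces: positive drift in $X$ makes the trajectory unlikely to spend long near a fixed shadow, and by BGIT this controls $d_{\mathcal C(Y)}$. Since one step of the walk alters $\pi_Y$ for only boundedly many $Y$, the number of $Y\in\mathcal S$ with $\pi_Y(1)\neq\pi_Y(w_n)$ is at most linear in $n$, and a union bound yields $\matP(\sup_Z d_{\mathcal C(Z)}(\pi_Z(1),\pi_Z(w_n)) \geq C\log n) = O(n^{1-aC}) \to 0$ for $C > 1/a$.

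For the lower bound, I would fix an element $h \in G$ acting loxodromically on some $\mathcal C(Y_0)$---a Dehn twist for $\Mod(S)$, an element supported on a free factor for $\Out(F_n)$, and analogous loxodromics in the other cases---and a word $\sigma$ of length $\ed = \Theta(\log n)$ in $\mathrm{supp}(\mu)$ spelling a power $h^m$ with $m = \Theta(\log n)$. For the prefactor in $\ed$ sufficiently small, $\mu^{*\ed}(\sigma) = n^{-\alpha}$ with $\alpha < 1$, so across $\lfloor n/\ed \rfloor$ disjoint windows of length $\ed$ in $[0,n]$ a first/second moment argument gives, with probability tending to $1$, at least one window $[j,j+\ed]$ whose increments exactly spell $\sigma$, creating projection $\asymp \log n$ to the corresponding translate $w_j Y_0$. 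The single-projection upper bound applied to the sub-walks before and after the window, combined with Behrstock's inequality, then shows that this in-window projection persists, giving $d_{\mathcal C(w_j Y_0)}(\pi_{w_j Y_0}(1), \pi_{w_j Y_0}(w_n)) \geq c\log n$.

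The principal obstacle is the persistence step in the lower bound: ensuring that a large projection created in a single window is not cancelled by the walk's behaviour before or after that window. This requires balancing the long-run length against the constant from the single-projection upper bound and invoking Behrstock's inequality together with the BBF axioms in a way that is uniform across the four geometric setups, which in each case amounts to exploiting a suitable acylindricity or quasiconvexity property of the action.
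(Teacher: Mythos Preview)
Your overall architecture matches the paper's: an exponential single-projection tail plus a union bound for the upper bound, and a second-moment argument over $\Theta(\log n)$-length windows together with Behrstock-inequality persistence for the lower bound. The paper packages the common geometry axiomatically (a ``projection system'' with a Behrstock inequality and a bound $s$ on pairwise non-$\pitchfork$ elements, together with an ``admissible'' measure) and then verifies these axioms in each of the four settings; your sketch effectively does the same.

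There is, however, one genuine gap in your upper-bound step. The assertion that ``one step of the walk alters $\pi_Y$ for only boundedly many $Y$'' is false in the mapping class group and $\Out(F_n)$ settings: a single Dehn twist changes the subsurface projection of a marking in infinitely many curve complexes (each by a bounded amount, since the projections are Lipschitz, but the set of $Y$ with $\pi_Y(1)\neq\pi_Y(w_n)$ is genuinely infinite). What you actually need for the union bound is that the number of $Y$ with $d_{\mathcal C(Y)}(\pi_Y(1),\pi_Y(w_n))\ge K$ is at most linear in $d_G(1,w_n)$ for some fixed threshold $K$. The paper obtains exactly this from a distance-formula-type lower bound (its Proposition~5.1), proved directly from the Behrstock inequality and the bound on pairwise non-overlapping elements---not from any per-step finiteness. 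With that in hand your union bound goes through as written.

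A smaller methodological difference: for the exponential tail the paper does not use BGIT or deviation estimates in the auxiliary hyperbolic space $X$. It instead uses an elementary ``replacement'' trick (Lemma~3.1 and Proposition~3.2): with definite probability the first two increments can be chosen so that the projection onto $Y_0$ stays below a fixed constant, and iterating gives exponential decay of $\matP(d_Z(1,w_n)\ge t)$ directly from the projection-system axioms. The hyperbolic space $X$ and linear progress with exponential decay are invoked only to check the admissibility condition $\matP(w_nY_0\not\pitchfork Y_0)\le C_0e^{-n/C_0}$, which is a much softer use. Your BGIT route is plausible, but as stated it does not explain why the projection \emph{size} (rather than merely the event that the geodesic visits the shadow) decays exponentially; you would need to supply a quantitative link between time spent near the shadow and $d_{\mathcal C(Y)}$, which is essentially a contracting-projection statement.
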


We will treat all cases simultaneously by using common features of the mentioned setups. We explain those conditions in Section \ref{sec:statement}, where we also state the theorem that covers all cases (Theorem \ref{thm:main}).

In fact, we remark that Theorem \ref{thm:main} covers even more cases. For example, it can be applied in the contexts of hyperplanes in cube complexes, of hierarchically hyperbolic spaces \cite{behrstock2014hierarchically, behrstock2015hierarchically}, and of hyperbolically embedded subgroups \cite{DGO}. Also, there is no need for the support of the measure to generate $G$, as long as it generates a ``large enough'' semigroup of $G$ (in the mapping class group case, it suffices that the semigroup contains a pseudo-Anosov and an infinite order reducible element). In the interest of brevity, we have chosen  Theorem \ref{thm:examples} as stated.

\subsection*{Application to random fibered $3$--manifolds}
Let $S$ be a closed connected orientable surface of genus at least $2$ and $\Mod(S)$ its mapping class group. For each $f \in \Mod(S)$, we denote the corresponding mapping torus $M(f)$. This is the $3$--manifold constructed by starting with $S \times [0,1]$ and glueing $S \times \{1\}$ to $S \times \{0\}$ via a homeomorphism in the class of $f$. Thurston's celebrated hyperbolization theorem for fibered $3$--manifolds states that $M(f)$ has a (unique by Mostow Rigidity) hyperbolic structure if and only if $f$ is pseudo-Anosov \cite{thurston1998hyperbolic, otal2001hyperbolization}.

The property of being pseudo-Anosov, and hence of determining a hyperbolic mapping torus, is typical. In fact, Rivin \cite{rivin2008walks} and Maher \cite{Maher} proved that for an appropriate random walk $(w_n)$ on $\Mod(S)$ the probability that $w_n$ is pseudo-Anosov goes to $1$ as $n \to \infty$. Hence, it make sense to address questions about the typical geometry of these random hyperbolic $3$-manifolds. For example, in \cite[Conjecture 5.10]{rivin2014statistics}, Rivin conjectures:

\begin{conj}[Rivin] \label{conj:rivin}
The injectivity radius of a random mapping torus of a surface $S$ decays as $1/\log^2 (n)$.
\end{conj}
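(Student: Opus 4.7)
The plan is to combine Theorem \ref{thm:examples} in the mapping class group case with the dictionary between subsurface projections of end invariants and lengths of short geodesics in hyperbolic $3$-manifolds, due to Minsky \cite{Minsky-bounded_geometry} and Brock--Canary--Minsky. The former supplies a logarithmic estimate on the largest subsurface projection realized by $w_n$, while the latter converts such an estimate into an inverse-quadratic estimate on the length of the shortest closed geodesic in the mapping torus $M_{w_n}$.

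Concretely, first I would observe that, by the theorems of Rivin \cite{rivin2008walks} and Maher \cite{Maher}, $w_n$ is pseudo-Anosov with probability going to $1$, so by Thurston's hyperbolization theorem $M_{w_n}$ is hyperbolic, and we may speak of its injectivity radius (which is comparable to half the length of the shortest closed geodesic once that length falls below the Margulis constant). Next, I would apply Theorem \ref{thm:examples} to $G=\Mod(S)$ with $\mathcal S$ the family of all proper subsurfaces, to obtain
$$\sup_{Y\in\mathcal S} d_Y(\lambda, w_n\lambda) \asymp \log n$$
with probability tending to $1$, where $\lambda$ is the fixed marking. Finally, I would invoke the model manifold theory to the effect that, for a pseudo-Anosov $f$ with invariant laminations $\nu^\pm$, the length of the shortest closed geodesic of $M_f$ satisfies
$$\ell_{\min}(M_f) \asymp \frac{1}{\bigl(\sup_Y d_Y(\nu^+, \nu^-)\bigr)^2}$$
once the supremum is sufficiently large. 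Combined with the comparison $\sup_Y d_Y(\nu^+, \nu^-) \asymp \sup_Y d_Y(\lambda, f\lambda)$, this gives that the injectivity radius of $M_{w_n}$ is of order $1/\log^2 n$, proving the conjecture.

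I expect the main obstacle to lie in the comparison $\sup_Y d_Y(\nu^+, \nu^-) \asymp \sup_Y d_Y(\lambda, f\lambda)$, which bridges the random walk (working with a fixed marking) and the model manifold (seeing only the end laminations). Each individual piece of the dictionary between combinatorial projections and the hyperbolic geometry of $M_f$ is standard, but since $\pi_Y(\nu^\pm)$ is only defined when the laminations meet $Y$ nontrivially, some care is needed. I would argue the comparison by combining the convergence $f^k \lambda \to \nu^\pm$ (as $k \to \pm\infty$) in every $\C(Y)$ where the limit makes sense with the Behrstock inequality and the bounded geodesic image theorem of Masur--Minsky \cite{MM2}. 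These tools ensure that large values of $d_Y(\lambda, f\lambda)$ occur at ``isolated'' scales along the quasi-axis of $f$, and each such value is comparable, up to a translate, to a contribution in $d_Y(\nu^+, \nu^-)$.
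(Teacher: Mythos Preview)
There is a genuine gap in the dictionary step. The assertion that model-manifold theory gives
\[
\ell_{\min}(M_f)\asymp \frac{1}{\bigl(\sup_Y d_Y(\nu^+,\nu^-)\bigr)^2}
\]
is not correct as stated. The Length Bound Theorem of Brock--Canary--Minsky controls the length of a \emph{specific} curve $\alpha$ via
\[
\ell_\alpha(f)\asymp \frac{S_\alpha(f)}{d_\alpha(f)^2+S_\alpha(f)^2},\qquad S_\alpha(f)=1+\sum_{Y\in\mathcal Y_\alpha}\{\{d_Y(f)\}\}_{K},
\]
where $d_\alpha$ is the \emph{annular} coefficient and $S_\alpha$ is a \emph{sum} of non-annular coefficients over subsurfaces with $\alpha$ in their boundary. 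If the supremum $\sup_Y d_Y$ is realised by a non-annular $Y$ (or if the annular maximum is accompanied by comparably large adjacent non-annular projections), the formula yields $\ell_\alpha\asymp 1/\sup_Y d_Y$, not $1/(\sup_Y d_Y)^2$. So Theorem~\ref{thm:examples} alone, which only pins down $\sup_Y d_Y(1,w_n)\asymp\log n$, cannot distinguish between $\mathrm{sys}(M(w_n))\asymp 1/\log n$ and $\asymp 1/\log^2 n$.

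The paper addresses this by proving a sharper probabilistic statement (Proposition~\ref{prop:pA_sub_growth}): with high probability there is a curve $\alpha_n$ with $d_{\alpha_n}(w_n)\gtrsim\log n$ \emph{and} $\sup_{Y\in\mathcal Y_{\alpha_n}} d_Y(w_n)$ uniformly bounded, so that $S_{\alpha_n}=1$ in the formula above. This requires re-running the second-moment argument with a carefully chosen word $x_n=s_n g t_n g u_n$ in which large Dehn twists about $\alpha$ are separated by a fixed pseudo-Anosov ``buffer'' $g$; the buffer forces the adjacent non-annular projections to stay bounded. For the lower bound on $\ell_\alpha$ one similarly needs control not of the supremum but of the \emph{sums} $\sum_{Z\subset X}\{\{d_Z(w_n)\}\}_K$ over all proper $X$, which is an additional input (from \cite{Si-tracking}) beyond Theorem~\ref{thm:examples}. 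Finally, one must also check that the systole is realised by a curve isotopic into the fiber, since the length formula only sees such curves. The comparison $d_Y(\nu^+,\nu^-)$ versus $d_Y(1,w_n)$ that you flag as the main obstacle is indeed handled via the bounded geodesic image theorem along the axis, but it is not where the argument is delicate.
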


Rivin also proves his conjecture in the case where $S$ is a punctured torus. In the case of punctured surfaces, injectivity radius should be interpreted as $1/2\cdot \mathrm{sys}(M)$, where $\mathrm{sys}(M)$ is the \emph{systole} of the hyperbolic manifold $M$, i.e. the length of the shortest geodesic in $M$.

In Section \ref{sec:shortest} we prove Rivin's conjecture by establishing:

\begin{thm} \label{thm:shortest_geo}
Let $\mu$ be a symmetric measure whose support is finite and generates $\Mod(S)$, and let $(w_n)$ denote the random walk driven by $\mu$. Then there exists $C>0$ so that 
\[
\matP\left(\frac{C^{-1}}{\log^2 (n)} \le \mathrm{sys}(M(w_n)) \le  \frac{C}{\log^2 (n)}\right)\to 1,
\] 
as $n \to \infty$.
\end{thm}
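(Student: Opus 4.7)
The plan is to combine Theorem~\ref{thm:examples} with the well-developed dictionary between subsurface projections and short geodesics in fibered hyperbolic $3$-manifolds. First I would apply Theorem~\ref{thm:examples} to $\Mod(S)$ with $\{\C(Y)\}_{Y\in\mathcal S}$ the curve complexes of all proper subsurfaces of $S$ (both annular and non-annular). This produces a constant $C_0$ such that, with probability tending to $1$,
\[
C_0^{-1}\log n \;\le\; D_n \;:=\; \sup_{Y\subsetneq S}\, d_Y(\lambda, w_n\lambda) \;\le\; C_0\log n.
\]
By Maher's theorem we may further condition on the event that $w_n$ is pseudo-Anosov, so that $M(w_n)$ is hyperbolic.

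The key geometric input is the Minsky bilipschitz model theorem \cite{Minsky-bounded_geometry}, as refined in the Brock--Canary--Minsky proof of the ending lamination theorem. Applied to the cyclic cover of $M(w_n)$ --- a surface bundle over $\mathbb R$ with ending laminations the stable and unstable foliations $\mu^{\pm}(w_n)$ of $w_n$ --- this machinery gives the comparison
\[
\mathrm{sys}(M(w_n)) \;\asymp\; \frac{1}{\bigl(\sup_{Y\subsetneq S} d_Y(\mu^+(w_n),\mu^-(w_n))\bigr)^{2}}
\]
with implicit constants depending only on the topology of $S$. The square appears because a short simple closed curve $\alpha$ in the hyperbolic manifold has length comparable to $1/d_Y^{2}$, where $Y$ is the subsurface supporting the dominant projection with $\alpha$ in its boundary; this is precisely the scaling that produces the $\log^2 n$ (and not merely $\log n$) in Rivin's conjecture.

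The third step is to transfer the two-sided bound on the \emph{walk projections} $d_Y(\lambda, w_n\lambda)$ to the corresponding \emph{lamination projections} $d_Y(\mu^+(w_n),\mu^-(w_n))$. Using $w_n$-invariance of the $\mu^\pm$, any $Y$ nearly realising $\sup_Y d_Y(\mu^+,\mu^-)$ may be translated by a power of $w_n$ into a fundamental domain on the hierarchy path from $\mu^-$ to $\mu^+$ through $\lambda$ and $w_n\lambda$; the Bounded Geodesic Image theorem of Masur--Minsky then equates $d_Y(\mu^+,\mu^-)$ with $d_Y(\lambda, w_n\lambda)$ up to an additive constant. In the opposite direction, Behrstock's inequality forces $d_Y(\lambda,\mu^-)$ and $d_Y(w_n\lambda, \mu^+)$ to be uniformly bounded whenever $d_Y(\lambda, w_n\lambda)$ is large, so that $d_Y(\mu^+,\mu^-) \ge d_Y(\lambda, w_n\lambda) - O(1)$. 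Plugging back into the previous display yields $\mathrm{sys}(M(w_n)) \asymp 1/\log^2 n$ with the stated probability, proving the theorem.

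The principal obstacle will be extracting the Minsky length estimate with constants \emph{uniform} across the varying Kleinian surface groups $\pi_1(S) \to \mathrm{Isom}(\mathbb H^3)$ associated with the $w_n$. While this uniformity is intrinsic to the Minsky--Brock--Canary framework (the bilipschitz constants depend only on $\chi(S)$), its combination with the subsurface dictionary in the fibered setting, together with verification that both annular twist contributions and non-annular complexity contributions are correctly captured by the single sup-over-$Y$ quantity above, is the technical heart of the argument.
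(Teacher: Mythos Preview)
Your outline has the right overall shape but rests on an incorrect length formula, and this is a genuine gap rather than a routine verification. The Brock--Canary--Minsky Length Bound Theorem does \emph{not} give $\mathrm{sys}(M(f)) \asymp 1/(\sup_Y d_Y)^2$. For a curve $\alpha$ it gives (Corollary~\ref{cor:useful_ELC})
\[
\ell_\alpha(f) \;\asymp\; \frac{S_\alpha(f)}{d_\alpha(f)^2 + S_\alpha(f)^2}, \qquad S_\alpha(f) = 1 + \sum_{Y\in\mathcal Y_\alpha}\{\{d_Y(f)\}\}_{K},
\]
where $d_\alpha$ is the \emph{annular} projection and $S_\alpha$ is a \emph{sum} of non-annular projections to subsurfaces bordered by $\alpha$. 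The square appears only in the annular term. If the largest projection is to a non-annular $Y$, a boundary curve $\alpha\subset\partial Y$ has $S_\alpha\gtrsim d_Y$ but possibly small $d_\alpha$, so $\ell_\alpha\asymp 1/S_\alpha\asymp 1/d_Y$, not $1/d_Y^2$. Thus the single quantity $\sup_Y d_Y$ does not capture what you need, and your closing paragraph correctly senses this but underestimates it.

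This breaks both directions. For the upper bound $\mathrm{sys}\le C/\log^2 n$ you must produce a curve $\alpha$ with $d_\alpha\gtrsim\log n$ \emph{and} $S_\alpha$ uniformly bounded; a large $\sup_Y d_Y$ guarantees neither that the maximiser is annular nor, if it is, that the adjacent non-annular projections are small. The paper obtains such an $\alpha$ via a refined second-moment argument (Proposition~\ref{prop:pA_sub_growth}(2)): one plants a block $s_n g t_n g u_n$ of three large Dehn-twist segments separated by pseudo-Anosov ``buffers'' $g$, so that the central annular projection persists while the Behrstock inequality and the buffers force all adjacent non-annular projections to be bounded. For the lower bound $\mathrm{sys}\ge C^{-1}/\log^2 n$ you need both $d_\alpha$ and $S_\alpha$ to be $O(\log n)$ for every $\alpha$; since $S_\alpha$ is a sum over possibly many subsurfaces, a $\log n$ bound on $\sup_Y d_Y$ is insufficient, and one needs the sum bound of Proposition~\ref{prop:pA_sub_growth}(1), quoted from \cite{Si-tracking}. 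Finally, you also omit the argument (Step~3 in the paper, using Otal's theorem and the Margulis lemma) that the systole of $M(w_n)$ is realised by a geodesic isotopic into the fiber $S$; without this, the length formula above does not apply to the systole at all.
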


\subsection*{Outline} In Section \ref{sec:statement} we introduce the general setup that covers all the cases of Theorem \ref{thm:examples}. In Section \ref{sec:expdecay} we prove a result useful to establish both the lower and the upper bound on projections, namely that it is exponentially unlikely that the projection of the random walk to $Z$ moves a large distance.
 The proof is based on ideas from \cite{Si-tracking,Si-contr}, but we simplify the arguments from those papers. In Section \ref{sec:lowerbound} we prove the lower bound on projections.
Besides our application to short curves on random mapping tori, this is the main and most original contribution of this paper. 
The proof is based on the second moment method, and to the best of our knowledge this is the first time that the method is applied in a similar context. In Section \ref{sec:upperbound} we prove the upper bound on projections, using ideas from \cite{ATW} and \cite{Si-tracking} (in which the upper bound is proved in the case of mapping class groups). In Section \ref{sec:examples} we verify that the examples in Theorem \ref{thm:examples} fit into the general framework.

Finally, in Section \ref{sec:shortest} we review the connection between subsurface projections and lengths of curves in hyperbolic mapping tori. When then use this to
prove Theorem \ref{thm:shortest_geo} verifying Rivin's conjecture.

\subsection*{Acknowledgements}
We are grateful to Igor Rivin, as well as one of the referees, for pointing out the connection between our work and Conjecture \ref{conj:rivin}. We also thank Yair Minsky and David Futer for helpful conversations about hyperbolic geometry. Finally, we thank the anonymous referees for valuable suggestions that improved our exposition.

This material is based upon work supported by the National Science Foundation under grant No. DMS-1440140 while the authors were in residence at the Mathematical Sciences Research Institute in Berkeley, California, during the Fall 2016 semester. The second author is also partially supported by NSF DMS-1400498.

\section{Statement of the main theorem}\label{sec:statement}

We now describe the common features of the projections in the various setups of Theorem \ref{thm:examples}.

\begin{defn}\label{defn:proj_system}
 A \emph{projection system}  $(\mathcal S,Y_0,\{\pi_{Z}\}_{Z\in\mathcal S},\pitchfork)$ on a finitely generated group $G$ endowed with the word metric $d_G$ is a quadruple consisting of the following objects.
 \begin{enumerate}
  \item A set $\mathcal S$ together with an action of $G$ of $\mathcal S$ with one orbit, and a specified element $Y_0\in\mathcal S$.
  \item For each $Z\in\mathcal S$, a metric space $\mathcal C(Z)$ and an $L$--Lipschitz map $\pi_Z:(G,d_G)\to \mathcal C(Z)$. We set $d_Z(x,y)=d_{\mathcal C(Z)}(\pi_Z(x),\pi_Z(y))$.
  \item We have $d_{gZ}(gx,gy)=d_Z(x,y)$ for each $g,x,y\in G$ and $Z\in\mathcal S$.
  \item\label{item:Behrstock} An equivariant symmetric relation $\pitchfork$ on $\mathcal S$. We require that there exists $B>0$ so that whenever $Y_0\pitchfork gY_0$, we have $\min\{d_{Y_0}(g,h),d_{gY_0}(1,h)\}< B$ for every $h\in G$.
  \item There exists an integer $s$ so that for each pairwise distinct $Y_1,\dots,Y_s\in\mathcal S$ there exist $i,j$ so that $Y_i\pitchfork Y_j$.
 \end{enumerate}
\end{defn}

Item \ref{item:Behrstock} above is probably most important, and in the context of mapping class groups, where $\C(Z)$ is the curve complex of a subsurface, it is called the Behrstock inequality \cite{Be}. See Figure \ref{Fig1}. The reader may consult Section \ref{sec:mcg} to see how this notion of a projection system fits into the well-known context of subsurface projections for mapping class groups.

It might be worth noting that there is no (Gromov-)hyperbolic space involved, even though we will make use of hyperbolic spaces to check a condition from Definition \ref{defn:admissible} below.

\begin{figure}[htbp]
\begin{center}
\includegraphics[width = .5 \textwidth]{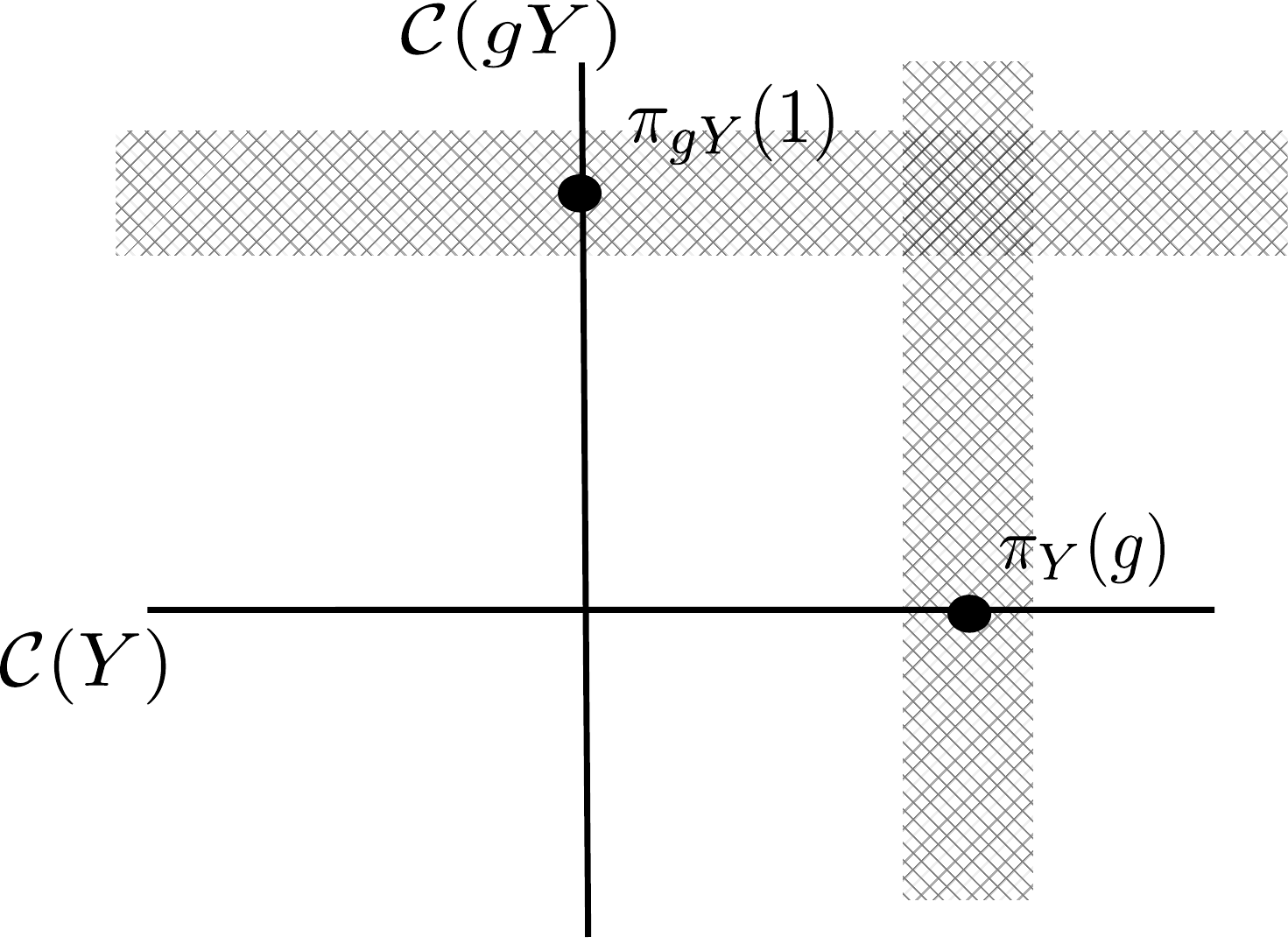}
\caption{An illustration of the Behrstock inequality (similar to Figure $6$ of \cite{Be}). Here, $Y_0 \pitchfork gY_0$ and the shaded region indicates the allowable projection of $h \in G$ to $\C(Y_0) \times \C(gY_0)$.}
\label{Fig1}
\end{center}
\end{figure}

We now give the hypotheses that we will need on the measure. Notice that we did not require the stabilizer of $Y_0$ to act on $\mathcal C(Y_0)$, which is the reason why item \ref{item:far_proj_same_k} below might look strange at first.

\begin{defn}\label{defn:admissible}
 Fix a projection system  $(\mathcal S,Y_0,\{\pi_{Z}\}_{Z\in\mathcal S},\pitchfork)$ on the group $G$. We call a finitely supported measure $\mu$ \emph{semi-admissible} if the random walk $(w_n)$ driven by $\mu$ satisfies the following.
 \begin{enumerate}
\item\label{item:super_transverse} There are $h_1,h_2\in G$ in the support of $\mu$ such that for every $Z\in\mathcal S$ either $h_1Y_0\pitchfork Z$ or $h_2Y_0\pitchfork Z$. 
\item\label{item:far_proj_same_k}  There are $x_1,x_2 \in G$ in the support of $\mu$ such that $d_{Y_0}(x_1h,x_2h)\geq 2B$ for each $h\in G$ and for $B$ as in Definition \ref{defn:proj_system}.\ref{item:Behrstock}.
 \item\label{item:linearproj} There exists $C_0\geq 1$ so that $\matP \big (w_n\in Stab(Y_0), d_{Y_0}(1,w_n)\geq n/C_0 \big )$ is positive for each $n\geq 1$, where $Stab(Y_0)$ denotes the stabilizer of $Y_0$ in $G$.
  \item\label{item:exp_decay_not_overlap} There exists $C_0\geq 1$ so that $\matP(w_nY_0\not\pitchfork Y_0)\leq C_0e^{-n/C_0}$ for each $n  \geq 1$.
 \end{enumerate}
We call a measure $\mu$ \emph{admissible} if both $\mu$ and the reflected measure $\hat{\mu}(g)=\mu(g^{-1})$ are semi-admissible.
\end{defn}

We remark that in our applications, we show that a convolution power of some initial measure is admissible. (See Section \ref{sec:examples}.) The main theorem of the paper is the following.

\begin{thm}\label{thm:main}
 Let $G$ be a group and let $(\mathcal S,Y_0,\{\pi_{Z}\}_{Z\in\mathcal S},\pitchfork)$ be a projection system on $G$. Let $(w_n)$ be a random walk driven by an admissible measure. Then there exists $C\geq 1$ so that
 $$\matP\left( \sup_{Z\in\mathcal S} d_Z(1,w_n) \in [C^{-1} \log n , C\log n]\right)$$
 goes to $1$ as $n$ goes to $\infty$.
\end{thm}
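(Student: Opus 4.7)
The plan is to prove the upper and lower bounds of Theorem \ref{thm:main} separately, via the strategies outlined in the introduction.

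\textbf{Upper bound.} The main ingredient, which is the content of Section \ref{sec:expdecay}, is the one-domain exponential decay: for each $Z\in\mathcal S$,
$$\matP(d_Z(1,w_n)\geq t)\leq C_1 e^{-t/C_1}$$
uniformly in $n$ and, by $G$-equivariance, in $Z$ throughout a fixed orbit. Given this, the upper bound follows by a union bound over domains that can actually contribute. The Behrstock inequality (Definition \ref{defn:proj_system}(4)) together with the bounded-antichain condition (Definition \ref{defn:proj_system}(5)) should yield a bounded-geodesic-image-type bound: along any length-$n$ walk in $\mathrm{Cay}(G)$, at most polynomially many $Z$ can realize $d_Z(1,w_n)\geq B$. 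Combining, $\matP(\sup_Z d_Z(1,w_n)\geq C\log n)\leq n^{O(1)}\cdot e^{-C(\log n)/C_1}$, which tends to $0$ for $C$ sufficiently large.

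\textbf{Lower bound via the second moment.} Fix a small $c>0$ and set $T=\lfloor c\log n\rfloor$. For spaced indices $k\in\{0,2T,4T,\dots\}$ up to $\lfloor n/2\rfloor$, let $Z_k=w_kY_0$ and define $A_k$ as the conjunction of: (i) $w_k^{-1}w_{k+T}\in\mathrm{Stab}(Y_0)$ and $d_{Y_0}(1,w_k^{-1}w_{k+T})\geq T/C_0$, so by equivariance $d_{Z_k}(w_k,w_{k+T})\geq T/C_0$; (ii) $d_{Z_k}(1,w_k)\leq T/(6C_0)$; and (iii) $d_{Z_k}(w_{k+T},w_n)\leq T/(6C_0)$. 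On $A_k$ the triangle inequality yields $d_{Z_k}(1,w_n)\geq T/(3C_0)=\Omega(\log n)$. Condition (3) of Definition \ref{defn:admissible} combined with the finite support of $\mu$ gives that the probability of event (i) is at least $e^{-\gamma T}=n^{-\gamma c}$ for some $\gamma>0$ (the positive-probability trajectory guaranteed by condition (3) is realized with probability at least the minimum $\mu$-weight to the $T$th power). Exponential decay applied to the reversed walk, admissible thanks to the symmetry in Definition \ref{defn:admissible}, yields (ii) and (iii) with constant probability conditional on (i). Choosing $c$ so that $\gamma c<1$ makes $\sum_k\matP(A_k)\to\infty$.

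\textbf{Second moment and bootstrap.} Next I would show $\matP(A_k\cap A_{k'})\lesssim\matP(A_k)\matP(A_{k'})$ for $k\neq k'$. The windows $[k,k+T]$ and $[k',k'+T]$ are disjoint, so the (i)-parts for $k$ and $k'$ are independent by the Markov property. The coupling through (ii)/(iii) is handled using condition (1) of Definition \ref{defn:admissible}: the super-transverse elements $h_1,h_2$ force the random domains $Z_k$ and $Z_{k'}$ to be $\pitchfork$-related with overwhelming probability, after which Definition \ref{defn:proj_system}(4) compels at least one of the two boundary contributions to be essentially constant, decoupling them. Paley--Zygmund then gives $\matP(\exists k:A_k)\geq\delta>0$. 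To upgrade this to probability $\to 1$, I would run the same construction on $\lfloor\log n\rfloor$ disjoint sub-walks of length $n/\log n$, which are independent by stationarity of increments; the probability that every sub-walk fails is then at most $(1-\delta)^{\log n}\to 0$.

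\textbf{Main obstacle.} The technical core is the second-moment bound. The delicate point is that the random domains $Z_k$ depend on the walk, so to ensure decoupling of the (ii)/(iii) conditions across different $k$, one must prove that $Z_k\pitchfork Z_{k'}$ with probability close to $1$ — precisely what condition (1) of admissibility is engineered for — and then invoke the Behrstock inequality without destroying the first-moment gain of $n^{1-\gamma c}$. The interplay between the randomness of the domains and the Behrstock decoupling is, I expect, where the bulk of the work lies.
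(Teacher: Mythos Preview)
Your upper bound is essentially the paper's argument: the ``polynomially many domains'' you allude to is exactly the content of Proposition~\ref{lowerbound}, which shows $\sum_Z\{\{d_Z(1,h)\}\}_K\le C\,d_G(1,h)$, so in fact only \emph{linearly} many domains can have projection above threshold. Combined with Proposition~\ref{prop:exp_decay_proj} this gives Theorem~\ref{thm:upper} just as you sketch.

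For the lower bound, the paper also uses the second moment method, but it does \emph{not} bootstrap. The point is that the second moment estimate is much tighter than your ``$\lesssim$'': Proposition~\ref{prop:exp2} shows $\matE(Y_iY_j)=p_n^2(1-O(n^{-\epsilon_4}))$ for $|i-j|\ge\log n$, i.e.\ the correlation actually vanishes. Plugging this into Paley--Zygmund gives $\matP(X>0)\ge 1-o(1)$ outright. You only aim for $\matP(A_k\cap A_{k'})\lesssim\matP(A_k)\matP(A_{k'})$, which forces you into the bootstrap. (Incidentally, the decoupling that makes this sharp estimate work comes from admissibility condition~(\ref{item:exp_decay_not_overlap}) --- exponential decay of $\matP(w_mY_0\not\pitchfork Y_0)$ --- rather than condition~(\ref{item:super_transverse}) as you suggest; the super-transverse elements are used earlier, in Lemma~\ref{lem:proj_stabilizes}.)

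Your bootstrap, as written, has a genuine gap. The events $A_k$ involve the conditions $d_{Z_k}(1,w_k)$ and $d_{Z_k}(w_{k+T},w_n)$, which reach all the way to the global endpoints $1$ and $w_n$; restricting to a sub-walk does not make these independent of the other sub-walks. If instead you run the construction with sub-walk endpoints $w_{a_j},w_{b_j}$ in place of $1,w_n$, then the events \emph{are} independent and you do get $\matP(\exists j:\sup_Z d_Z(w_{a_j},w_{b_j})\ge\epsilon_0\log n)\to 1$ --- but you still owe the passage from $d_Z(w_{a_j},w_{b_j})$ to $d_Z(1,w_n)$, and the domain $Z$ realizing the large projection depends on the walk inside $[a_j,b_j]$, so you cannot simply invoke exponential decay for a fixed $Z$. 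This is fixable (the Behrstock-inequality argument in the proof of Proposition~\ref{prop:exp2} is exactly the template), but it is not the one-line independence claim you wrote, and once you carry out that argument carefully you have essentially reproduced the paper's sharper second moment bound anyway.
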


The upper bound and lower bound on random projections are proved separately in Theorem \ref{thm:lower} and Theorem \ref{thm:upper} respectively, which together give Theorem \ref{thm:main}.

\section{Exponential decay of projection distance}\label{sec:expdecay}

In this section we fix the projection system $(\mathcal S,Y_0,\{\pi_{Z}\}_{Z\in\mathcal S},\pitchfork)$ on the finitely generated group $G$ endowed with the word metric $d_G$. Also, we let $\mu$ be a semi-admissible measure generating the random walk $(w_n)$.

The following lemma tells us, thinking of $g$ as an intermediate step of the random walk, that there is a definite probability that the projection onto $Z$ of the random walk does not change from some point on.

\begin{lemma}\label{lem:proj_stabilizes}
There exist $\epsilon,C>0$ so that for any $n$ the following holds. For any $g\in G$ and $Z \in \mathcal{S}$, we have $\matP(d_Z(g,gw_n)\geq C)\leq 1-\epsilon$.
\end{lemma}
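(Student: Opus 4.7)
The strategy is to reduce to $g=1$ via equivariance, condition on a single carefully chosen first step of the random walk, and combine the Behrstock inequality with semi-admissibility items (1) and (2).

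By Definition 2.1.3, $d_Z(g, gw_n) = d_{g^{-1}Z}(1, w_n)$, so it suffices to bound $\matP(d_Z(1, w_n) \geq C)$ uniformly in $Z \in \mathcal S$. Fix such a $Z = g_Z Y_0$. Semi-admissibility item (1) provides some $h_i \in \{h_1, h_2\} \subset \mathrm{supp}(\mu)$ with $h_i Y_0 \pitchfork Z$, equivalently $Y_0 \pitchfork h_i^{-1} g_Z Y_0$. Applying the Behrstock inequality of Definition 2.1.4 and translating by $h_i$ (using equivariance of the metrics) yields the dichotomy
$$\min\{d_{h_i Y_0}(g_Z, h),\ d_Z(h_i, h)\} < B \quad \text{for every } h \in G.$$
I would then condition on the positive-probability event $\{w_1 = h_i\}$ and write $w_n = h_i \tilde w_{n-1}$, where $\tilde w_{n-1} := g_2 \cdots g_n$ is independent of $w_1$ and distributed as $w_{n-1}$. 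Taking $h = w_n$ in the dichotomy and using equivariance on the first term, the conditional probability that $d_Z(h_i, w_n) \geq B$ is at most $\matP\bigl(d_{Y_0}(h_i^{-1} g_Z, \tilde w_{n-1}) < B\bigr)$.

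The crux is bounding this last probability uniformly over the ``target'' point $p := \pi_{Y_0}(h_i^{-1} g_Z) \in \mathcal C(Y_0)$, and this is exactly what semi-admissibility item (2) is designed for. I would decompose $\tilde w_{n-1} = g_1' h'$ with $h' = g_2' \cdots g_{n-1}'$ independent of the first increment $g_1'$; item (2) then supplies $x_1, x_2 \in \mathrm{supp}(\mu)$ with $d_{Y_0}(x_1 h', x_2 h') \geq 2B$, so any $B$-ball in $\mathcal C(Y_0)$ contains at most one of $\pi_{Y_0}(x_1 h'), \pi_{Y_0}(x_2 h')$. This gives
$$\matP\bigl(d_{Y_0}(\tilde w_{n-1}, p) < B \bigm| h'\bigr) \leq 1 - \min(\mu(x_1), \mu(x_2)),$$
and integrating removes the conditioning on $h'$. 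Combined with the Behrstock dichotomy this yields $\matP(d_Z(h_i, w_n) < B \mid w_1 = h_i) \geq \epsilon_0 := \min(\mu(x_1), \mu(x_2))$. Since $\pi_Z$ is $L$-Lipschitz and $h_i \in \mathrm{supp}(\mu)$, $d_Z(1, h_i) \leq L \cdot d_G(1, h_i)$ is bounded by a constant $D$ independent of $Z$ and $n$; the triangle inequality, together with a factor of $\mu(h_i)$ for removing the conditioning on $w_1$, then gives the lemma with $C := D + B$ and $\epsilon := \min(\mu(h_1), \mu(h_2)) \cdot \epsilon_0$.

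The main obstacle is that the element $g_Z$ with $Z = g_Z Y_0$ is essentially arbitrary, so the target point $p$ can lie anywhere in $\mathcal C(Y_0)$; in particular no drift-type argument comparing $\pi_{Y_0}(\tilde w_{n-1})$ to a fixed reference point will work. Semi-admissibility item (2) is engineered to sidestep this by providing a uniform-in-$p$ one-step spreading statement, which is what makes the final probability bound uniform in $Z$.
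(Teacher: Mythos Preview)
Your argument is correct and follows essentially the same ``replacement trick'' as the paper: force the first two increments of the walk to be $h_i$ then $x_j$, use Behrstock to convert the $x_j$-separation in $\mathcal C(Y_0)$ into a bound on $d_Z$, and pay the Lipschitz cost of the $h_i$-step. The only cosmetic differences are that the paper reduces to $Z=Y_0$ (keeping $g$ general) rather than to $g=1$ (keeping $Z$ general), and that it conditions once on $h=w_2^{-1}w_n$ and then picks $(i,j)$ deterministically, whereas you condition in two stages; these are equivalent. One small omission: your decomposition $\tilde w_{n-1}=g_1'h'$ requires $n\ge 2$, so the case $n=1$ should be handled separately via the Lipschitz bound, as the paper does.
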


The proof exploits a ``replacement'' trick, where we start with $w_n$ having $d_Z(g,gw_n)\geq C$ and (thinking of $w_n$ as a random path) replace its initial subpath of length $2$ to lower the $d_Z$.

\begin{proof}
We first note that it suffice to prove the lemma for $Z = Y_0$. This is because $Z = zY_0$ for some $z \in G$ and $d_Z(g,gw_n) = d_{zY_0}(g,gw_n) = d_{Y_0}(z^{-1}g,z^{-1}gw_n)$.

Fix $h_1,h_2$ as in Definition \ref{defn:admissible}.\ref{item:super_transverse}, and $x_1,x_2$ as in Definition \ref{defn:admissible}.\ref{item:far_proj_same_k}. If $n =1$, the statement of the lemma holds if we choose $C> L\sup_{h\in supp(\mu)}d_G(1,h)$, where $L$ is the Lipschitz constant of $\pi_{Y_0}$.
 For $n\geq 2$ we have
 $$\matP(d_{Y_0}(g,gw_n)\geq C)=\sum_{h\in G}\matP(d_{Y_0}(g,gw_2h)\geq C)\matP(w_2^{-1}w_n=h).$$
 
Fix any $h\in G$. There exists $i$ so that $g^{-1}Y_0\pitchfork h_iY_0$ and hence $Y_0\pitchfork gh_iY_0$. Also, since $d_{gh_iY_0}(gh_ix_1h,gh_ix_2h)\geq 2B$, there exists $j$ so that $d_{gh_iY_0}(1,gh_ix_jh)\geq B$. Hence $d_{Y_0}(gh_i,gh_ix_jh)< B$ by Definition \ref{defn:proj_system}.\ref{item:Behrstock}, so that $d_{Y_0}(g,gh_ix_jh)< B+Ld_G(1,h_i)$.
This proves $\matP(d_{Y_0}(g,gw_2h)\geq C)\leq 1-\epsilon$ if $C>B+L\max\{d_G(1,h_1),d_G(1,h_2)\}$, where we set $\epsilon=\min_{i,j}\{\mu(h_i)\mu(x_j)\}>0$. To finish up,
 
 $$\matP(d_{Y_0}(g,gw_n)\geq C)\leq (1-\epsilon)\sum_{h\in G}\matP(w_2^{-1}w_n=h)=1-\epsilon. $$ 
\end{proof}

%
%
%

The following key proposition says that it is (exponentially) unlikely that a random walk projects far away on $Z$. To prove the lower bound on the largest random projection we just need the case $R=0$, i.e. we do not need the conditional probability.

\begin{prop}\label{prop:exp_decay_proj}
There exists $M\geq 1$ so that for each $Z\in\mathcal S$, each positive integer $n$ and each $t,R\geq 0$ we have
$$\matP \big(d_Z(1,w_n)\geq t+R \;| \; d_Z(1,w_n)\geq R \big)\leq Me^{-t/M}.$$
\end{prop}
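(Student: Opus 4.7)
My plan is to prove the unconditional tail ($R=0$) first, then bootstrap it to the conditional statement via a strong Markov / exit-time argument.

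\emph{Unconditional tail.} I would show $\matP(d_Z(1, w_n) \ge t) \le M_0 e^{-t/M_0}$ by iterating Lemma~\ref{lem:proj_stabilizes} across many two-step windows in the walk. Group the steps in pairs $P_k = (g_{2k-1}, g_{2k})$ for $k = 1, \dots, \lfloor n/2 \rfloor$, and let
$$A_k = \{d_Z(w_{2k-2}, w_n) < C\},$$
where $C$ is from Lemma~\ref{lem:proj_stabilizes}. The replacement argument in the proof of that lemma, applied at position $(2k-1, 2k)$, shows that conditional on $(P_j)_{j \ne k}$ the $\mu \times \mu$-probability of $P_k$ falling in the ``good set'' is at least $\epsilon$, so $\matP(A_k \mid (P_j)_{j\ne k}) \ge \epsilon$ a.s. On $\{d_Z(1, w_n) \ge T\}$, any $A_k$ that holds forces $d_Z(1, w_{2k-2}) \ge T - C$, which by the Lipschitz bound on $\pi_Z$ together with the finite-support assumption on $\mu$ requires $k$ to be proportional to $T$; in particular for $K := c T$ with $c$ small the events $A_1, \dots, A_K$ must all fail, giving
$$\matP(d_Z(1, w_n) \ge T) \le \matP\Big( \bigcap_{k=1}^{K} A_k^c \Big).$$
A conditional-positivity argument on the $\{\mathbf 1_{A_k}\}$ then yields $\matP(\bigcap_{k=1}^{K} A_k^c) \le (1-\epsilon')^K$, which is the desired exponential bound.

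\emph{Conditional tail.} Let $T_R = \inf\{m \le n : d_Z(1, w_m) \ge R\}$ and note $\{d_Z(1, w_n) \ge R\} \subseteq \{T_R \le n\}$, with $d_Z(1, w_{T_R})$ bounded above by $R$ plus a constant (from the Lipschitz estimate). The strong Markov property at $T_R$ makes $w_{T_R}^{-1} w_n$ an independent random walk, so $d_Z(w_{T_R}, w_n) = d_{w_{T_R}^{-1} Z}(1, w_{T_R}^{-1} w_n)$ inherits the unconditional exponential tail just proved, uniformly in the shifted projection by the equivariance in Definition~\ref{defn:proj_system}. Since $\{d_Z(1, w_n) \ge R + t\}$ forces $d_Z(w_{T_R}, w_n) \ge t - O(1)$, we get
$$\matP(d_Z(1, w_n) \ge R + t) \le M_0 e^{-(t-O(1))/M_0}\,\matP(T_R \le n).$$
A maximal inequality derived by applying Lemma~\ref{lem:proj_stabilizes} at time $T_R$ gives $\matP(T_R \le n) \le \epsilon^{-1} \matP(d_Z(1, w_n) \ge R - C)$, and absorbing the $O(1)$ discrepancy between $R-C$ and $R$ into the constants yields the desired conditional bound.

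The main technical obstacle will be the geometric decay $\matP(\bigcap_{k=1}^K A_k^c) \le (1-\epsilon')^K$: the events $A_k$ involve the global endpoint $w_n$, so each $A_j$ depends on \emph{all} the pairs $P_k$, and a naive chain-rule conditioning fails because $A_j$ for $j<k$ is correlated with $P_k$ through $w_n$. I would address this either by replacing $A_k$ with a bounded-window variant (truncating the ``future'' to a length independent of $k$, for which Lemma~\ref{lem:proj_stabilizes} still applies), or by a coupling/resampling argument showing that for every configuration in $\bigcap A_k^c$ a positive fraction of single-pair resamplings of $P_k$ exits the bad set, yielding the geometric decay directly.
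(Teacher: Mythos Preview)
Your Step~1 has a genuine gap that you correctly flag but do not close. The events $A_k=\{d_Z(w_{2k-2},w_n)<C\}$ all depend on the terminal point $w_n$, and the conditional positivity $\matP(A_k\mid (P_j)_{j\ne k})\ge\epsilon$ does \emph{not} yield $\matP\big(\bigcap_{k\le K}A_k^c\big)\le(1-\epsilon')^K$. (Toy counterexample: $P_1,P_2$ i.i.d.\ fair coins, $A_1=A_2=\{P_1=P_2\}$; then $\matP(A_k\mid P_{3-k})=1/2$ but $\matP(A_1^c\cap A_2^c)=1/2$.) Your two proposed fixes are not proofs: a ``bounded-window'' version of $A_k$ no longer sees $w_n$, so there is no reason its failure for all small $k$ should force $d_Z(1,w_n)$ to be large; and the resampling sketch just restates the conditional-positivity bound you already have.

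The paper's proof is much shorter and, in fact, is your Step~2 idea applied at \emph{every} level rather than only at $R$. Set $f(s)=\matP(d_Z(1,w_n)\ge s)$, fix $s$, and let $\tau$ be the first time the projection reaches $[s+C,\infty)$; since a single step moves the projection by at most $C$, one has $d_Z(1,w_\tau)\in[s+C,s+2C]$ on $\{\tau\le n\}$. A single application of Lemma~\ref{lem:proj_stabilizes} at time $\tau$ then shows that, conditionally, with probability at least $\epsilon$ the remaining walk moves the projection by less than $C$, so $d_Z(1,w_n)\in[s,s+3C)$. This yields the recursion
\[
f(s+3C)\le(1-\epsilon)\,f(s),
\]
which iterates to $f(R+3Ci)\le(1-\epsilon)^i f(R)$ for all $i\ge0$. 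That is the conditional exponential tail directly; no separate unconditional argument, no decorrelation of many events, and no maximal inequality are needed.

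There is also a smaller gap in your Step~2 as written: your maximal inequality gives $\matP(T_R\le n)\le\epsilon^{-1}\matP(d_Z(1,w_n)\ge R-C)$, and ``absorbing the $O(1)$ discrepancy'' between $R-C$ and $R$ amounts to bounding $f(R-C)/f(R)$, which is exactly the ratio estimate you are trying to prove. (This is fixable by stopping at level $R+C$ instead, but the paper's recursion makes the whole detour unnecessary.)
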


The idea of proof is to show that the probability that the projection to $Z$ is much further away than $s \ge0$ plus some constant is a bounded multiple of the probability that it is about $s$. This is because, in view of Lemma \ref{lem:proj_stabilizes}, once an intermediate step of the random walk projects further than $s$, there is a definite probability that the projection does not change.

\begin{proof}
First of all, let us rephrase the statement. Let $f_{Z,n}(s)=\matP(d_Z(1,w_n)\geq s)$. We have to find $M$, not depending on $Z,n$, so that $f_{Z,n}(t+R)\leq f_{Z,n}(R) Me^{-t/M}$ for each $t,R\geq 0$. For the proof, fix $Z,n$ and set $f(s) = f_{Z,n}(s)$.

We now fix some constants. Let $\epsilon,C>0$ be as in Lemma \ref{lem:proj_stabilizes}; in particular 
$$\matP(d_Z(g,gw_m)\geq C)\leq \frac{1-\epsilon}{\epsilon} \: \matP(d_Z(g,gw_m)< C)$$
for each $g\in G$ and $m\geq 0$, since $\matP(d_Z(g,gw_m)< C)\geq \epsilon$. We increase $C$ to ensure that $Ld_G(1,g)\leq C$ for each $g\in \mathrm{supp}(\mu)$, where $L$ is the Lipschitz constant of $\pi_Z$. In particular, any given step of the random walk moves the projection by at most $C$.

For $g\in G$ and $m$ an integer, denote by $E_{g,m}$ the event where $w_{n-m}=g$ and $n-m=\min\{j:d_Z(1,w_j)\geq s+C\}$. Then
\begin{align*}
 &f(s+3C)  \\
 &\le \sum_{\substack{d_Z(1,g)\in [s+C,s+2C]\\ m\leq n}} \matP(d_Z(1,gw_m)\geq s+3C)\matP(E_{g,m}) \\
 &\leq \sum \matP(d_Z(g,gw_m)\geq C)\matP(E_{g,m}) \\
 &\leq \frac{1-\epsilon}{\epsilon}\sum \matP(d_Z(g,gw_m)< C)\matP(E_{g,m}) \\
 &\le \frac{1-\epsilon}{\epsilon}\left(f(s)-f(s+3C)\right),
\end{align*}
where we used $f(s)-f(s+3C)=\matP \big (d_Z(1,w_n)\in [s,s+3C) \big )$.
 
Hence, $f(s+3C)\leq (1-\epsilon) f(s)$, and in turn we get $f(R+3Ci)\leq (1-\epsilon)^if(R)$ for each integer $i\geq 0$. This implies the required exponential decay of $f$.
\end{proof}

\section{Lower bound on projections via the second moment method}\label{sec:lowerbound}

\subsection{Heuristic discussion}
In this section we prove that there exists a projection of at least logarithmic size with high probability. The reason why one expects a logarithmic size projection is that, roughly speaking, a random word of length $n$ contains all subwords of length $\epsilon \log n$, and in particular it will contain a subword that creates a logarithmic size projection. The remaining parts of the word should not affect this projection too much in view of Proposition \ref{prop:exp_decay_proj}. This heuristic alone, however, only gives that the expected number of logarithmic size projections diverges, but it does not say anything about the probability that one exists.

The actual proof relies on the second moment method, i.e. the estimate that for a random variable $X\ge 0$ with finite variance and $\matE(X)>0$, we have
\[
\matP(X >0) \ge \frac{\matE(X)^2}{\matE(X^2)}.
\] 

The second moment method is especially suited for dealing with random variables $X$ that can be written as $\sum_{i=1}^n Y_i$, where ``most pairs'' $Y_i,Y_j$ are ``mostly uncorrelated'', meaning that $\matE(Y_iY_j)$ is approximately $\matE(Y_i)\matE(Y_j)$. In this case the numerator $\sum_{i,j} \matE(Y_i)\matE(Y_j)$ is approximately equal to the denominator $\sum_{i,j} \matE(Y_iY_j)$, implying that $\matP(X>0)$ is close to $1$.

\subsection{The proof}\label{subsec:proof_lower}

We use the notation of Theorem \ref{thm:main} as well as Definitions \ref{defn:proj_system} and \ref{defn:admissible}. In this section we prove that there exists $\epsilon_0>0$ so that
$$\matP\left(\sup_{Z\in\mathcal S} d_Z(1,w_n)\geq \epsilon_0\log n\right)\to 1,\ \ \ (*)$$
as $n$ tends to $\infty.$

%
%

Let $c$ be the minimal probability of an element in $\mathrm{supp} (\mu)$.
Then for each $k$ and each $g\in G$ we have that $\matP(w_k=g)$ is either $0$ or at least $c^k$. We fix a positive $\epsilon_1< \frac{1}{\log(1/c)}$ from now until the end of the section and set $k(n)=\lfloor\epsilon_1\log n\rfloor$.
To simplify the notation, we fix $n$ and set $k=k(n)$, and stipulate that all constants appearing below do not depend on $n$.

In view of the discussion above and Definition \ref{defn:admissible}.\ref{item:linearproj}, there exists $\epsilon_2\in (0,1)$ so that for every sufficiently large $n$ we can choose $x_n\in G$ with the properties that
\begin{enumerate}
 \item $p_n:=\matP(w_{k}=x_n)\geq n^{\epsilon_1 \log(c)}$,
 \item $x_nY_0=Y_0$, and
 \item $d_Y(1,x_{n})\geq \epsilon_2\log(n)$.
\end{enumerate}


For $i\leq n-k$, let $W_i$ be the indicator function of the event $w_i^{-1}w_{i+k}=x_n$. Also, let $L_i$ be the indicator function for the event that $d_{Y_0}(w_i^{-1}, 1) \le \epsilon_2 \log(n)/3$ and let $R_i$ be the indicator function for the event that 
$d_{Y_0}(1, w_{i+k}^{-1}w_n) \le \epsilon_2 \log(n)/3$.

Set $Y_i = L_iW_iR_i$ and note that if $Y_i = 1$, then
\begin{align*}
d_{w_iY_0}(1, w_n) &\ge d_{w_iY_0}(w_i,w_ix_n) - d_{w_iY_0}(1,w_i) - d_{w_ix_nY_0}(w_ix_n, w_n) \\
&\ge d_{Y_0}(1,x_n) - d_{Y_0}(w_i^{-1}, 1) - d_{Y_0}(1, w_{i+k}^{-1}w_n)\\
&\ge \epsilon_2 \log(n) - 2\frac{\epsilon_2}{3} \log(n)\\
&= \frac{\epsilon_2}{3} \log(n).\ \ \ (**)
\end{align*}

Hence, what we want to show is that with high probability there exists $i$ with $Y_i=1$.

\begin{lemma} \label{lem:exp1}
$\matE(Y_i) =p_n(1 - O(n^{-\epsilon_3}))$ for each $i$, where $\epsilon_3>0$.
\end{lemma}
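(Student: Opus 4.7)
The plan is to exploit an independence structure among $L_i, W_i, R_i$ and then apply the exponential decay estimate from Proposition \ref{prop:exp_decay_proj} to the two indicator factors that are not $W_i$.

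First, I would observe that $L_i$, $W_i$, and $R_i$ depend on disjoint blocks of increments of the random walk, hence are independent. Writing $w_n = g_1 g_2 \cdots g_n$ with $g_j$ i.i.d.\ with law $\mu$, we have $w_i^{-1} = g_i^{-1} \cdots g_1^{-1}$ depending only on $g_1, \dots, g_i$; the product $w_i^{-1}w_{i+k} = g_{i+1}\cdots g_{i+k}$ depending only on $g_{i+1}, \dots, g_{i+k}$; and $w_{i+k}^{-1}w_n = g_{i+k+1}\cdots g_n$ depending only on $g_{i+k+1}, \dots, g_n$. Therefore $\matE(Y_i) = \matE(L_i)\,\matE(W_i)\,\matE(R_i)$. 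By stationarity of the increments, $w_i^{-1}w_{i+k}$ has the same law as $w_k$, so $\matE(W_i) = \matP(w_k = x_n) = p_n$.

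Next, I would handle $\matE(R_i)$: since $w_{i+k}^{-1}w_n$ has the same distribution as $w_{n-i-k}$, Proposition \ref{prop:exp_decay_proj} applied with $R=0$, $t = \epsilon_2 \log(n)/3$ gives
\[
\matP\!\left(d_{Y_0}(1,\,w_{i+k}^{-1}w_n) \ge \tfrac{\epsilon_2}{3}\log n\right) \le M\,n^{-\epsilon_2/(3M)},
\]
so $\matE(R_i) \ge 1 - M n^{-\epsilon_2/(3M)}$.

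For $\matE(L_i)$, the element $w_i^{-1} = g_i^{-1}\cdots g_1^{-1}$ is a product of $i$ i.i.d.\ elements with law $\hat{\mu}$, so $w_i^{-1}$ has the same law as the $i$-th step of the random walk driven by $\hat{\mu}$. By admissibility of $\mu$, the reflected measure $\hat{\mu}$ is semi-admissible, so Proposition \ref{prop:exp_decay_proj} applies to this reversed walk and yields
\[
\matP\!\left(d_{Y_0}(w_i^{-1},1) \ge \tfrac{\epsilon_2}{3}\log n\right) \le M\,n^{-\epsilon_2/(3M)},
\]
hence $\matE(L_i) \ge 1 - M n^{-\epsilon_2/(3M)}$. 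Combining, with $\epsilon_3 := \epsilon_2/(3M)$,
\[
\matE(Y_i) = p_n\bigl(1 - O(n^{-\epsilon_3})\bigr)\bigl(1 - O(n^{-\epsilon_3})\bigr) = p_n\bigl(1 - O(n^{-\epsilon_3})\bigr).
\]

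The only mildly nontrivial point — and the place where the full admissibility (as opposed to mere semi-admissibility) of $\mu$ is used — is the treatment of the left factor $L_i$ via the reversed walk; everything else is a packaging of independence with the exponential decay bound already established.
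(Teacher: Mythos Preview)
Your proof is correct and follows essentially the same approach as the paper's: factor $\matE(Y_i)$ using independence of $L_i,W_i,R_i$ and bound $\matP(L_i=0)$ and $\matP(R_i=0)$ via Proposition~\ref{prop:exp_decay_proj}. Your version is slightly more explicit in that you spell out the disjoint-increment justification for independence and point out that bounding $\matP(L_i=0)$ requires applying Proposition~\ref{prop:exp_decay_proj} to the reversed walk driven by $\hat\mu$ (hence full admissibility, not just semi-admissibility), a detail the paper's proof leaves implicit.
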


\begin{proof}
 Since $Y_i = L_iW_iR_i$ and $L_i, W_i, R_i$ are independent, it suffices to show that $\matP(L_i = 0)$ and $\matP(R_i = 0)$ decay polynomially as $n \to \infty$. Indeed, by Proposition \ref{prop:exp_decay_proj},
\begin{align*}
\matP\left(d_{Y_0}(w_i^{-1},1) > \frac{\epsilon_2}{3} \log(n)\right) &\le Me^{-\frac{\epsilon_2}{3M}\log(n)}\\
&\le M n^{-\frac{\epsilon_2}{3M}}.
\end{align*}
as required. The case  for $\matP(R_i = 0)$ is similar.
\end{proof}

The following proposition is the key one to apply the second moment method.

\begin{prop} \label{prop:exp2}
 $\matE(Y_iY_j)=p_n^2(1 - O(n^{-\epsilon_4}))$ whenever $|i-j|\geq \log n$, where $\epsilon_4>0$.
\end{prop}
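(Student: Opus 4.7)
The plan is to exploit the independence structure of $Y_iY_j$. Assume WLOG $i<j$, and (using $|i-j|\geq \log n$ and $\epsilon_1<1$) set $A=w_i$, $B=w_i^{-1}w_{i+k}$, $C=w_{i+k}^{-1}w_j$, $D=w_j^{-1}w_{j+k}$, $E=w_{j+k}^{-1}w_n$, which are mutually independent (they correspond to disjoint blocks of increments). Observe that $W_i$ is a function of $B$ alone and $W_j$ a function of $D$ alone, so $\matE(W_iW_j)=p_n^2$; moreover $L_i=L_i(A)$ and $R_j=R_j(E)$ are independent of $W_iW_j$, while $R_i=R_i(C,D,E)$ and $L_j=L_j(A,B,C)$ share dependence with $W_j$ and $W_i$ respectively.

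The upper bound $\matE(Y_iY_j)\leq p_n^2$ is immediate. For the lower bound I use
\[
\matE(Y_iY_j)\geq \matP(W_iW_j=1)-\sum_{F}\matP(W_iW_j=1,\,F=0),
\]
with $F$ ranging over $L_i,R_i,L_j,R_j$. The two terms with $F=L_i$ and $F=R_j$ factor as $p_n^2\cdot O(n^{-\epsilon_3})$ by independence together with Lemma~\ref{lem:exp1}.

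The main obstacle is bounding the $F=R_i$ term (and symmetrically $F=L_j$). Factoring out $\matP(B=x_n)\matP(D=x_n)=p_n^2$, I reduce to showing $\matP(d_{Y_0}(1,Cx_nE)>t)=O(n^{-\epsilon_4})$ with $t=\epsilon_2\log(n)/3$. The triangle inequality gives $d_{Y_0}(1,Cx_nE)\leq d_{Y_0}(1,C)+d_{C^{-1}Y_0}(1,x_nE)$; Proposition~\ref{prop:exp_decay_proj} handles the first term (as $C$ is a $\mu$-walk). For the second, Definition~\ref{defn:admissible}.\ref{item:exp_decay_not_overlap} applied to $\hat\mu$ gives $\matP(C^{-1}Y_0\not\pitchfork Y_0)=O(n^{-(1-\epsilon_1)/C_0})$ since $|C|\geq(1-\epsilon_1)\log n$.

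When $C^{-1}Y_0\pitchfork Y_0$, the Behrstock inequality (Definition~\ref{defn:proj_system}.\ref{item:Behrstock}) applied with $h=x_nE$ forces either $d_{C^{-1}Y_0}(1,x_nE)<B$ (harmless for $n$ large) or $d_{Y_0}(C^{-1},x_nE)<B$. In the latter case, using $x_n\in Stab(Y_0)$ to obtain $d_{Y_0}(x_n,x_nE)=d_{Y_0}(1,E)$, together with $d_{Y_0}(1,x_n)\geq \epsilon_2\log n$,
\[
\epsilon_2\log(n)-d_{Y_0}(1,E)\leq d_{Y_0}(1,x_nE)\leq d_{Y_0}(1,C^{-1})+B,
\]
so at least one of $d_{Y_0}(1,C^{-1})$, $d_{Y_0}(1,E)$ exceeds $(\epsilon_2\log n-B)/2$. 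Proposition~\ref{prop:exp_decay_proj} (applied to $\hat\mu$ for $C^{-1}$ and to $\mu$ for $E$) bounds each such probability by $O(n^{-\epsilon_2/(2M)})$. A symmetric argument handles $F=L_j$ via the reflected walk. Taking $\epsilon_4$ to be the minimum of the three exponents produced completes the proof.
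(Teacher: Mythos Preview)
Your proof is correct and follows essentially the same approach as the paper. Both arguments rely on the same ingredients: the block decomposition into independent pieces, Proposition~\ref{prop:exp_decay_proj} for polynomial decay of each projection term, Definition~\ref{defn:admissible}.\ref{item:exp_decay_not_overlap} for the transversality $Y_0\pitchfork w_{i+k}^{-1}w_jY_0$, and the Behrstock inequality to propagate the large projection $d_{Y_0}(1,x_n)$ into a contradiction. The only organizational difference is that the paper packages the bad possibilities into a single event $A_3$ (independent of $W_i,W_j$) and proves the deterministic inclusion $\{W_iW_j=1\}\cap A_3^c\subset\{Y_iY_j=1\}$, whereas you union-bound over which of $L_i,R_i,L_j,R_j$ fails and condition on $W_i=W_j=1$ before running the Behrstock argument; the underlying computation is the same.
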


The idea of proof is the following. We have to prove that if in two specified spots along the random path we see the word $x_n$, then it is very likely that $Y_i=Y_j=1$, i.e. that certain projections are not too big. In order to show this, we consider the 3 remaining subpaths of the random path. See Figure \ref{Fig:subpaths}. Such paths and their inverses give small projection to $Y$ by Proposition \ref{prop:exp_decay_proj}. But then it is easy to control all projections we are interested in using the Behrstock inequality.

\begin{proof}
Fix $i\leq j-\log n$. Let $A_1 = \{W_i = 1\}$, $A_2 = \{W_j= 1 \}$, and $A_3$ be the event that either $Y_0 \not\pitchfork w_{i+k}^{-1}  w_jY_0$ or one of the following distances is larger than $\frac{\epsilon_2}{3} \log(n) - B$:
 \begin{enumerate}
  \item $d_{Y_0}(1,w^{-1}_i)$,
  \item $d_{Y_0}(1,w_{i+k}^{-1}w_j)$,
  \item $d_{Y_0}(1,w_j^{-1}w_{i+k})$,
  \item $d_{Y_0}(1,w_{j+k}^{-1}w_n)$.
 \end{enumerate}
Notice that $A_1,A_2,A_3$ are independent. We claim that $W_iW_j1_{A_3^c} \le Y_iY_j \le W_iW_j$. Once we establish the claim, we have that $p_n^2(1 - \matP(A_3)) \le \matE(Y_iY_j) \le p_n^2$. Since the probability of each of the $5$ events making up $A_3$ decays polynomial in $n$, the first by the admissibility condition Definition \ref{defn:admissible}.\ref{item:exp_decay_not_overlap} and the last four by Proposition \ref{prop:exp_decay_proj}, this will complete the proof of the proposition.

\begin{figure}[htbp]
\begin{center}
\includegraphics[width = .9 \textwidth]{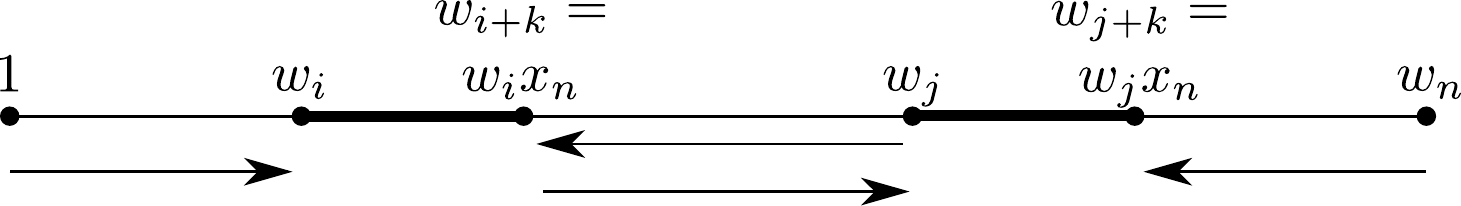}
\caption{The line represents the random path under consideration, with the thicker segments being the occurrences of $x_n$. The arrows suggest the paths that have small enough projection onto $Y_0$ in the event $A_3^c$ (which is often).}
\label{Fig:subpaths}
\end{center}
\end{figure}

To prove the claim, we show that $A_1 \cap A_2 \cap A_3^c \subset \{Y_i =1\} \cap \{Y_j = 1\}$, i.e. that if
 \begin{itemize}
  \item $w_i^{-1}w_{i+k}=x_n$, $w_j^{-1}w_{j+k}=x_n$,
  \item $Y_0 \pitchfork w_{i+k}^{-1}  w_jY_0$, and
  \item all distances in $(1) - (4)$ listed above are at most $\frac{\epsilon_2}{3} \log(n) -B$
 \end{itemize}
then for $l$ equal to $i$ or $j$,  $d_{Y_0}(w_l^{-1}, 1) \le \frac{\epsilon_2}{3} \log(n)$ and
$d_{Y_0}(1, w_{l+k}^{-1}w_n) \le \frac{\epsilon_2}{3} \log(n)$. We show this for $l=i$ since the other case is similar.

First, note that by assumption $w_{i+k} = w_ix_n$ and $w_{j+k} = w_jx_n$. Hence,
\begin{align*}
d_{Y_0}(1, w_{i+k}^{-1}w_n) &= d_{w_iY_0}(w_ix_n, w_n)\\
&\le d_{w_iY_0}(w_ix_n, w_j) + d_{w_iY_0}(w_j, w_n)\\
&\le \frac{\epsilon_2}{3} \log(n) - B  +  d_{w_iY_0}(w_j, w_n),
\end{align*}
and so it suffices to show that $d_{w_iY_0}(w_j, w_n) \le B$. If not, then since $w_ix_nY_0=w_iY_0 \pitchfork w_jY_0$, we must have that $d_{w_jY_0}(w_ix_n,w_n) \le B$. On the other hand, by the triangle inequality,
\begin{align*}
d_{w_jY_0}(w_ix_n,w_n) &\ge d_{w_jY_0}(w_j,w_jx_n) - d_{w_jY_0}(w_ix_n,w_j)-d_{w_jx_nY_0}(w_jx_n,w_n)\\
& =d_{Y_0}(1,x_n)-d_{Y_0}(1,w_{i+k}^{-1}w_j)-d_{Y_0}(1,w_{j+k}^{-1}w_n)\\
&\ge \epsilon_2\log n -2 \frac{\epsilon_2}{3}\log n +2B,
\end{align*}
a contradiction.
\end{proof}

We are ready to prove the lower bound $(*)$.

\begin{thm}\label{thm:lower}
Let $(\mathcal S,Y,\{\pi_{Z}\}_{Z\in\mathcal S},\pitchfork)$ be a projection system on the finitely generated group $G$ and let $\mu$ be a an admissible measure generating the random walk $(w_n)$. Then there exists $\epsilon_0>0$ so that
\[
\matP \left(\sup_{Z\in\mathcal S} d_{Z}(1, w_n) \ge \epsilon_0 \log(n) \right) \to 1,
\]
as $n \to \infty$.
\end{thm}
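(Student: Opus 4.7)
The plan is to apply the second moment inequality
\[
\matP(X > 0) \geq \frac{\matE(X)^2}{\matE(X^2)}
\]
to $X := \sum_{i \in I} Y_i$ for a suitable collection $I$ of well-spaced indices. The key observation is that by $(**)$, on the event $\{Y_i = 1\}$ the element $Z := w_iY_0 \in \mathcal S$ satisfies $d_Z(1,w_n) \geq \frac{\epsilon_2}{3}\log n$. Hence
\[
\{X > 0\} \subset \bigl\{\sup\nolimits_{Z \in \mathcal S} d_Z(1,w_n) \geq \tfrac{\epsilon_2}{3} \log n \bigr\},
\]
and it suffices to show $\matP(X > 0) \to 1$; the theorem then follows with $\epsilon_0 := \epsilon_2/3$.

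Since Proposition \ref{prop:exp2} only controls $\matE(Y_iY_j)$ under the separation condition $|i-j| \geq \log n$, I would take $I$ to be an arithmetic progression of step size $\lceil \log n \rceil$ inside $\{1,\dots,n-k\}$, so that $|I| \asymp n/\log n$ and any two distinct $i,j \in I$ satisfy $|i-j| \geq \log n$. Lemma \ref{lem:exp1} then gives $\matE(X) = |I| p_n (1 - O(n^{-\epsilon_3}))$, while using $Y_i^2 = Y_i$ and Proposition \ref{prop:exp2},
\[
\matE(X^2) = \sum_{i \in I}\matE(Y_i) + \sum_{i \neq j \in I}\matE(Y_iY_j) \leq |I| p_n + |I|^2 p_n^2.
\]

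Plugging these into the second moment inequality yields
\[
\matP(X > 0) \geq \frac{|I| p_n}{1 + |I| p_n}\bigl(1 - o(1)\bigr),
\]
so the proof reduces to verifying $|I| p_n \to \infty$. Here the crucial constraint is the choice $\epsilon_1 < 1/\log(1/c)$ imposed at the outset: since $p_n \geq n^{\epsilon_1 \log c} = n^{-\epsilon_1 \log(1/c)}$ and $|I| \asymp n/\log n$, we obtain
\[
|I| p_n \asymp \frac{n^{\,1 - \epsilon_1\log(1/c)}}{\log n} \to \infty,
\]
completing the proof. The main conceptual obstacle, which has already been absorbed into Proposition \ref{prop:exp2}, is the near-independence estimate $\matE(Y_iY_j) \approx p_n^2$: the spacing $|i-j| \geq \log n$ is precisely what guarantees that each of the three intermediate segments of the random walk (before, between, and after the two occurrences of $x_n$) is long enough for the exponential decay of Proposition \ref{prop:exp_decay_proj} to control the error events packaged into $A_3$, and it is this control that makes the second moment method produce probability tending to $1$ rather than merely a lower bound on the expected count.
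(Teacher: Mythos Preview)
Your argument is correct and follows the same second-moment strategy as the paper, relying on the same ingredients (Lemma~\ref{lem:exp1}, Proposition~\ref{prop:exp2}, and the estimate $(**)$). The only difference is cosmetic: the paper sums over \emph{all} indices $1\le i\le n-k$ and handles the near-diagonal pairs $|i-j|<\log n$ by the crude bound $\matE(Y_iY_j)\le \matE(Y_i)\le p_n$, showing their total contribution $O(n\log(n)\,p_n)$ is negligible compared to $n^2p_n^2$; you instead thin the index set to an arithmetic progression of step $\lceil\log n\rceil$ so that no near-diagonal pairs arise at all. Both devices are standard and lead to the same conclusion via $np_n\to\infty$ (equivalently $|I|p_n\to\infty$), which is exactly where the constraint $\epsilon_1<1/\log(1/c)$ is used.
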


\begin{proof}
Let $Y_i$ be as above and set $X = X_n = \sum_{i=1}^{n-k} Y_i$. We show that $\matP(X> 0)$ approaches $1$ as $n \to \infty$, which suffices in view of the estimate $(**)$.

Since $\matE(Y_iY_j) \le \matE(Y_i) \le p_n(1 - O(n^{-\epsilon_3}))$ by Lemma \ref{lem:exp1}, Proposition \ref{prop:exp2} implies that

\begin{align*}
\sum_{i,j}\matE(Y_iY_j) &= \sum_{|i-j|< \log n} \matE(Y_iY_j)+\sum_{|i-j|\geq \log n} \matE(Y_iY_j) \\
&\leq 3n\log (n) p_n(1-O(n^{-\epsilon_3}))+ n^2 p_n^2(1-O(n^{-\epsilon_4}))\\
&\leq n^2 p_n^2(1-o(1)),
\end{align*}
where we used that $n\log(n)p_n/ n^2p_n^2 = \log(n)/np_n$ tends to $0$. This holds since $np_n\geq n^{1+\epsilon_1 \log(c)}$, which grows polynomially since $\epsilon_1<\frac{1}{\log(1/c)}$.

By the second moment method (and Lemma \ref{lem:exp1}), we have
\begin{align*}
\matP (X > 0) &\ge \frac{\matE(X)^2}{\matE(X^2)} = \frac{\sum_{i,j}\matE(Y_i)\matE(Y_j)}{\sum_{i,j}\matE(Y_iY_j)} \\
&\geq \frac{(n-k)^2p_n^2(1-o(1))}{n^2 p_n^2(1- o(1))}=1-o(1),
\end{align*}
as required.
\end{proof}

\section{Upper bound on projections via distance formula lower bound}\label{sec:upperbound}

%
%

We start with a proposition that provides a distance-formula-type lower bound on the distance in $G$ between two elements.

This will be useful for us because in order to show that with high probability there is a logarithmic upper bound on $\sup_{Z} d_Z(1,w_n)$ the idea is the following. For each given $Z$ we have the required upper bound in view of Proposition \ref{prop:exp_decay_proj}, and in view of the following proposition we only need to check a controlled number of $Z$.

\begin{prop} \label{lowerbound}
Let $(\mathcal S,Y_0,\{\pi_{Z}\}_{Z\in\mathcal S},\pitchfork)$ be a projection system on the finitely generated group $G$ endowed with the word metric $d_G$.
Then there are $K,C \ge 0$ so that for all $h\in G$ we have
$$\sum_{Z\in\mathcal S} \{\{d_Z(1,h)\}\}_{K} \le  C\cdot  d_G(1,h),$$
where $\{\{x\}\}_K = x$ if $x \ge K$ and $0$ otherwise.
\end{prop}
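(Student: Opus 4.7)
The plan is to fix a word geodesic $1=g_0,g_1,\dots,g_n=h$ with $n=d_G(1,h)$, set $\mathcal S^{*}=\{Z\in\mathcal S:d_Z(1,h)\ge K\}$ for a threshold $K$ to be chosen large compared to the Lipschitz constant $L$ and the Behrstock constant $B$, and prove the bound by an \emph{active-set plus multiplicity} argument. To each $Z\in\mathcal S^{*}$ I assign the active set
\[
I_Z=\{m\in\{0,\dots,n\}:d_Z(1,g_m)\ge K/4\text{ and }d_Z(g_m,h)\ge K/4\}.
\]
Since $m\mapsto d_Z(1,g_m)$ is $L$-Lipschitz on $\{0,\dots,n\}$ and travels from $0$ to $d_Z(1,h)$, a routine Lipschitz preimage estimate shows that its preimage of the window $[K/4,\,d_Z(1,h)-K/4]$ has at least $(d_Z(1,h)-K/2-L)/L$ elements; each such $m$ lies in $I_Z$, since the triangle inequality gives $d_Z(g_m,h)\ge d_Z(1,h)-d_Z(1,g_m)\ge K/4$. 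Taking $K$ a large enough multiple of $L$ yields a constant $c=c(L,K)>0$ with $|I_Z|\ge c\cdot d_Z(1,h)$ for every $Z\in\mathcal S^{*}$.

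The crucial step is to show that for every $m$ the set $\mathcal S^{*}_{m}=\{Z\in\mathcal S^{*}:m\in I_Z\}$ is an antichain under $\pitchfork$; granted this, condition~(5) of Definition~\ref{defn:proj_system} bounds $|\mathcal S^{*}_{m}|$ by $s-1$, and interchanging summation gives
\[
\sum_{Z\in\mathcal S}\{\{d_Z(1,h)\}\}_K\ \le\ c^{-1}\sum_{Z\in\mathcal S^{*}}|I_Z|\ =\ c^{-1}\sum_{m=0}^{n}|\mathcal S^{*}_{m}|\ \le\ c^{-1}(s-1)(n+1),
\]
which delivers the required inequality with some $C=C(L,B,s)$ after absorbing constants. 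To prove the antichain claim, suppose towards a contradiction that $Z_1\pitchfork Z_2$ with both $Z_i\in\mathcal S^{*}_{m}$. Choose representatives $z_i\in G$ with $z_iY_0=Z_i$ and apply the Behrstock inequality (Definition~\ref{defn:proj_system}(\ref{item:Behrstock})) at each of the three points $1$, $h$, and $g_m$. Each application yields a distance less than $B$, either of the form $d_{Z_1}(z_2,x)<B$ (living in $\mathcal{C}(Z_1)$) or $d_{Z_2}(z_1,x)<B$ (living in $\mathcal{C}(Z_2)$). By pigeonhole, at least two of the three ``small distances'' land in the same $\mathcal{C}(Z_i)$, whereupon the triangle inequality in $\mathcal{C}(Z_i)$ forces one of $d_{Z_i}(1,h)$, $d_{Z_i}(1,g_m)$, $d_{Z_i}(g_m,h)$ to be less than $2B$ --- contradicting the lower bound $\ge K/4$ from $Z_i\in\mathcal S^{*}_{m}$, as soon as $K>8B$.

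The main obstacle is precisely this antichain step. It is essential to invoke the Behrstock inequality at \emph{all three} reference points $1$, $h$, $g_m$ simultaneously: using only two of them can leave mixed configurations (``$Z_1$-alternative chosen at one point, $Z_2$-alternative at the other'') uncontradicted, and it is the third application that, by pigeonhole, forces two of the small-distance conclusions to concentrate in the same $\mathcal{C}(Z_i)$ where the triangle inequality can bite. Once $K$ is fixed larger than $8B$ and a sufficient multiple of $L$, the remaining pieces --- the Lipschitz preimage lemma for $|I_Z|$ and one appeal to condition~(5) --- are routine and assemble into the stated inequality.
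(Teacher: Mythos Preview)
Your proof is correct and follows essentially the same strategy as the paper's: fix a geodesic, assign to each large-projection $Z$ an ``active'' set of indices along the geodesic where both $d_Z(1,g_m)$ and $d_Z(g_m,h)$ are bounded below, show via Behrstock that $\pitchfork$-related $Z$'s have disjoint active sets, and invoke condition~(5) to bound multiplicity. The only cosmetic differences are that the paper takes $I_Z$ to be a specific subinterval $[i_Z,t_Z]$ (with $i_Z$ the last index where $d_Z(1,\cdot)\le 2B+L$ and $t_Z$ the first later index where $d_Z(\cdot,h)\le 2B+L$) rather than a level set, and that the paper proves the disjointness claim by chaining three Behrstock applications sequentially ($h_0\to h_k\to h_N$) rather than by your pigeonhole-on-three-points argument; both arguments use the inequality at the same three reference points and arrive at the same contradiction.
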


In the setting of the mapping class groups, the metric spaces $\C(Z)$ are curve complexes of subsurfaces and Proposition \ref{lowerbound} follows from the Masur--Minsky distance formulas \cite{MM2}. However, the proof we give does not rely on the Masur--Minsky hierarchy machinery and applied to our general notion of a projection system. Our proof is a generalization of the argument appearing in \cite{ATW}.

\begin{proof}
We use the notation and constants in Definitions \ref{defn:proj_system}. 

Let $K  = 5B+3L$ and fix a geodesic $1 = h_0 , h_1, \ldots, h_N = h$ in $G$.  Let $\Omega$ be the set of $Z\in\mathcal S$ with $d_Z(1,h)\geq K$. For each $Z \in \Omega$ choose $i_Z , t_Z \in \{0,\ldots,N\}$ as follows: $i_Z$ is the largest index $k$ with $d_Z(h_0,h_k) \le 2B+L$ and $t_Z$ is the smallest index $k$ greater than $i_Z$ with $d_Z(h_k, h_N) \le 2B+L.$  Write $I_Z = [i_Z, t_Z] \subset \{0,1,\ldots, N\}$ and note that this subinterval is well-defined.
Further since the projection of adjacent vertices in the geodesic have $d_Z$ less than or equal to $L$, $d_Z(h_0,h_{k}), d_Z(h_{k},h_N) \ge 2B$ for all $k \in I_Z$ and $d_Z(h_{i_Z},h_{t_Z}) \ge B+L$. 

\begin{claim}
If $aY_0,bY_0 \in \Omega$ and $aY_0 \pitchfork bY_0$ then $I_{aY_0} \cap I_{bY_0}  = \emptyset $.
\end{claim}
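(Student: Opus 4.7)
The plan is to argue by contradiction. Suppose $k \in I_{aY_0} \cap I_{bY_0}$; I will derive a contradiction by applying the Behrstock inequality to three carefully chosen test points, namely the two endpoints $h_0,h_N$ of the geodesic and the ``middle'' point $h_k$.

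First I would put the Behrstock inequality (Definition \ref{defn:proj_system}.\ref{item:Behrstock}) into a form suited to the unordered pair $(aY_0, bY_0)$. Since $aY_0 \pitchfork bY_0$, equivariance of $\pitchfork$ gives $Y_0 \pitchfork (a^{-1}b)Y_0$, so applying the inequality with group element $a^{-1}b$ and test point $a^{-1}x$, and then translating by $a$ using equivariance of the projections, yields for every $x \in G$
\[
\min\bigl\{d_{aY_0}(b,x),\ d_{bY_0}(a,x)\bigr\} < B.
\]
In other words, every element of $G$ falls into at least one of two ``buckets'': bucket $A$, where $\pi_{aY_0}(x)$ is within $B$ of $\pi_{aY_0}(b)$, or bucket $B$, where $\pi_{bY_0}(x)$ is within $B$ of $\pi_{bY_0}(a)$.

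Next I would apply this dichotomy to the three test points $h_0, h_k, h_N$. Because $k \in I_{aY_0}$, the observations just preceding the claim give $d_{aY_0}(h_0,h_k)\ge 2B$, $d_{aY_0}(h_k,h_N)\ge 2B$, and also $d_{aY_0}(h_0,h_N)\ge K>2B$ (the last from $aY_0 \in \Omega$). Thus the three images in $\mathcal{C}(aY_0)$ are pairwise more than $2B$ apart, so at most one of $\{h_0,h_k,h_N\}$ can lie in bucket $A$. Symmetrically, since $k \in I_{bY_0}$, at most one lies in bucket $B$. But by Behrstock each of the three test points must belong to $A\cup B$, and three items cannot fit into two slots of capacity one — contradiction.

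The main obstacle, to the extent there is one, is the equivariance bookkeeping: verifying that the displayed form of the Behrstock inequality really is what comes out of Definition \ref{defn:proj_system}, and checking that all three pairwise distances in $\mathcal{C}(aY_0)$ (and symmetrically in $\mathcal{C}(bY_0)$) among $h_0,h_k,h_N$ genuinely exceed $B$ using the definitions of $i_Z$ and $t_Z$. Once this routine unpacking is done, the contradiction is immediate by pigeonhole, and no further geometric input is needed.
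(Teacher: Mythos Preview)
Your proof is correct and follows essentially the same approach as the paper: argue by contradiction from $k\in I_{aY_0}\cap I_{bY_0}$, apply the Behrstock inequality to the three test points $h_0,h_k,h_N$, and use the lower bounds $d_Z(h_0,h_k),\,d_Z(h_k,h_N)\ge 2B$ together with $d_Z(h_0,h_N)\ge K$ to reach a contradiction. The paper carries this out by a sequential case analysis (assuming without loss of generality which branch of Behrstock holds at $h_0$ and chaining through $h_k$ to $h_N$), whereas you package the same content symmetrically via pigeonhole; the underlying argument is the same.
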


\begin{proof}
Toward a contradiction, take $k \in I_{aY_0} \cap I_{bY_0}$. Since $aY_0\pitchfork bY_0$ we have either $d_{aY_0}(b,h_0) < B$ or $d_{bY_0}(a, h_0) < B$. Assume the former; the latter case is proven by exchanging the occurrences of $a$ and $b$ in the proof. By the triangle inequality, 
\begin{eqnarray*}
d_{aY_0}(b, h_k) &\ge& d_{aY_0}(h_0,h_k) - d_{aY_0}(b, h_0) \\
&\ge& 2B - B = B.
\end{eqnarray*}

So, since $aY_0\pitchfork bY_0$, we have $d_{bY_0}(a,h_k) < B$ and 
\begin{eqnarray*}
d_{bY_0}(a, h_N) &\ge& d_{bY_0}(h_k,h_N) - d_{bY_0}(a,h_k) \\
&\ge& 2B -B = B.
\end{eqnarray*}
and we conclude, again since $aY_0 \pitchfork bY_0$, that $d_{aY_0}(b, h_N) \le B$. This, together with our initial assumption, implies
$$d_{aY_0}(h_0,h_N) \le d_{aY_0}(h_0, b) + d_{aY_0}(b, h_N) \le 2B < K$$
contradicting that $aY_0 \in \Omega$.
\end{proof}

Returning to the proof of the proposition, we have the intervals $\{I_Z: Z\in \Omega\}$ of $\{0,1, \ldots, N\}$. By the claim above and our assumption on the number of pair-wise $\pitchfork$-incomparable elements of $\mathcal S$, each $k \in \{0,1, \ldots, N \}$ is contained in at most $s$ intervals. Hence,
$$\sum_{Z \in \Omega} | t_Z - i_Z | \le  s \cdot  d_G(1, h) .$$

Finally, using the Lipschitz condition on the projections,
\begin{eqnarray*}
d_Z(1,h) &\le& d_Z(h_{i_Z}, h_{t_Z}) + 4B +2L \\
&\le& L|t_Z - i_Z| +4B +2L.
\end{eqnarray*}
Since, we have $d_Z(1,h) \ge 5B+3L$ for each $Z \in \Omega$, we get $\frac{1}{5L} \cdot d_Z(1,h)\le |t_Z- i_Z|$ and so putting this with the inequality above
$$\sum_{Z \in \Omega} d_Z(1,h) \le 5sL \cdot d_G(1,h) $$
as required.
\end{proof}

\begin{thm}\label{thm:upper}
Let $(\mathcal S,Y_0,\{\pi_{Z}\}_{Z\in\mathcal S},\pitchfork)$ be a projection system on the finitely generated group $G$ and let $\mu$ be a semi-admissible measure generating the random walk $(w_n)$. Then there exists $A \ge 1$ so that
 $$\matP(\exists Z\in\mathcal S\ :\ d_{Z}(1,w_n)\geq A\log n)$$
 tends to $0$ as $n$ tends to $\infty$.
\end{thm}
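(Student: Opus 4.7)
The plan is to combine the deterministic ``distance-formula'' bound of Proposition \ref{lowerbound} with the exponential tail bound of Proposition \ref{prop:exp_decay_proj}, and then conclude via Markov's inequality applied to a sum over \emph{all} of $\mathcal S$. The point is that although $\mathcal S$ is typically infinite, Proposition \ref{lowerbound} forces the expected number of $Z$'s with moderately large projection to grow only linearly in $n$, while Proposition \ref{prop:exp_decay_proj} then lets us upgrade ``moderately large'' to ``$\ge A\log n$'' at a multiplicative cost of $n^{-A/M}$, leaving a polynomially small tail that we kill by taking $A$ large.

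First I would note that, since $\mathrm{supp}(\mu)$ is finite, there is a constant $D$ with $d_G(1,w_n) \le Dn$ deterministically. Plugging $h = w_n$ into Proposition \ref{lowerbound} gives
\[
\sum_{Z\in\mathcal S}\{\{d_Z(1,w_n)\}\}_K \le CDn \quad \text{a.s.,}
\]
and since each nonzero summand is at least $K$, this forces $|\{Z : d_Z(1,w_n)\ge K\}| \le CDn/K$ almost surely. Taking expectation and using Tonelli to swap with the countable sum then yields
\[
\sum_{Z \in \mathcal S} \matP\bigl(d_Z(1,w_n) \ge K\bigr) \le \frac{CD}{K}\,n.
\]
Next, applying Proposition \ref{prop:exp_decay_proj} to each fixed $Z$ with $R = K$ and $t = A\log n - K$ (valid once $A\log n \ge K$) gives
\[
\matP\bigl(d_Z(1,w_n)\ge A\log n\bigr) \le Me^{K/M}\,n^{-A/M}\cdot\matP\bigl(d_Z(1,w_n)\ge K\bigr).
\]
Summing this over $Z \in \mathcal S$ and substituting the previous estimate,
\[
\sum_{Z\in\mathcal S}\matP\bigl(d_Z(1,w_n) \ge A\log n\bigr) \le \frac{CDMe^{K/M}}{K}\,n^{1-A/M},
\]
and Markov's inequality bounds the probability appearing in the theorem by this sum, giving the result whenever $A > M$.

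The main obstacle to steer around is that one cannot union-bound directly over $\mathcal S$ (which is infinite in the main examples), nor restrict to a fixed finite subset (since the relevant $Z$'s depend on the realization of the walk). Proposition \ref{lowerbound} defeats this: its deterministic content implicitly bounds the expected number of ``candidate'' $Z$'s (those with projection above the fixed threshold $K$) by a quantity linear in $n$, so that once coupled with the conditional exponential bound of Proposition \ref{prop:exp_decay_proj}, the entire argument collapses to a single application of Markov with no need to identify the relevant $Z$'s in advance.
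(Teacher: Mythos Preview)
Your proof is correct and follows essentially the same approach as the paper: both use Proposition \ref{lowerbound} together with the deterministic bound $d_G(1,w_n)\le Dn$ to show $\sum_{Z}\matP(d_Z(1,w_n)\ge K)$ grows at most linearly in $n$, then apply the conditional exponential tail from Proposition \ref{prop:exp_decay_proj} and a union bound to conclude. The only cosmetic differences are that the paper absorbs your factor of $1/K$ into the constant and phrases the final step as a union bound rather than Markov's inequality (which of course amount to the same thing here).
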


\begin{proof}
Let $C,K$ be as in Proposition \ref{lowerbound}, let $M$ be as in Proposition \ref{prop:exp_decay_proj}, and set $l = \sup_{h \in supp (\mu)}d_G(1,h)$. 
Then we notice that $\sum_{Z\in\mathcal S} \matP(d_{Z}(1,w_n)\geq K)\leq Cl \cdot n$ for each $n\geq 0$. In fact $\matP(d_{Z}(1,w_n)\geq K)$ is the expected value of the indicator function of the event $A_Z=\{d_{Z}(1,w_n)\geq K\}$, so that the aforementioned sum equals the expected value of the random variable $|\{Z\in\mathcal S : d_Z(1,w_n)\geq K\}|$. By Proposition \ref{lowerbound} we have $|\{Z\in\mathcal S : d_Z(1,g)\geq K\}|\leq C d_G(1,g)$ for each $g\in G$, hence the estimate follows.

Choose any $A>M$. For $n$ large enough that $A\log n\geq K$ we have
 \begin{align*}
 \matP\big(\exists Z\in\mathcal S\ :\ & d_{Z}(1,w_n) \geq A\log n \big) \\
&\le \sum_{Z\in\mathcal S}\matP\big( d_{Z}(1,w_n)\geq A\log n \: | \: d_{Z}(1,w_n)\geq K \big)\matP(d_{Z}(1,w_n)\geq K) \\
 & \le Me^{-(A\log n-K)/M}\sum \matP\big(d_{Z}(1,w_n)\geq K\big) \\
 &\le (MCl e^{K/M}) n^{1-A/M},
 \end{align*}
where the second inequality follows from Proposition \ref{prop:exp_decay_proj}. Since this quantity tends to $0$ by our choice of $A$, the proof is complete.
\end{proof}

\section{Proof of Theorem \ref{thm:examples}} \label{sec:examples} Here we show how Theorem \ref{thm:examples} follows from Theorem \ref{thm:main}. We give full details in the case of mapping class groups; the other cases follow a very similar outline and we provide slightly fewer details and the needed references.

\subsection{Mapping class groups and subsurface projections} \label{sec:mcg}
In this subsection, we assume that the reader has some familiarity with subsurface projections, as defined by Masur--Minsky in \cite{MM2}. Set $G = \Mod(S)$. Since there are only finitely many $G$--orbits of (isotopy classes of) essential subsurfaces, we can deal with each orbit separately. For any proper essential subsurface $Y_0$ of $S$, let $\mathcal{S} = \{gY_0 : g \in G\}$, and for $Z \in \mathcal{S}$, let $\pi_Z$ denote the subsurface projection to the curve complex $\C(Z)$ of $Z$. If we complete $\partial Y_0$ to a marking $\lambda$ (a collection of curves cutting $S$ into disks and once-punctured disks), then we define $\pi_Z \colon G \to \C(Z)$ by $\pi_Z(g) = \pi_Z(g\lambda)$. With this definition, $\pi_Z$ is $L$--Lipschitz for some $L >1$ (\cite[Lemma 2.5]{MM2}). This verifies the first $3$ conditions in the definition of a projection system (Definition \ref{defn:proj_system}). 

Subsurfaces $Y$ and $Z$ overlap if, up to isotopy, they are neither disjoint nor nested. In this case, we write $Y \pitchfork Z$. Note that by construction, if $Y$ and $Z$ overlap, then $d_Z(\lambda, Y) \le L$. Here, $\pi_Z(Y)$ is by definition the projection of the boundary of $Y$ to $\C(Z)$.
The usual Behrstock inequality \cite{Be} states that there exists $D\ge 0$ such that for any marking $\eta$ and overlapping subsurfaces $Y$ and $Z$,
$\min\{d_Y(\partial Z,\eta), d_Z(\partial Y,\eta)\} \le D.$ Hence, setting $B = D+L$ verifies the fourth condition of Definition \ref{defn:proj_system}.
To show that subsurface projections give a projection system for the mapping class group, it remains to show the there is a bound on the size of a collection of pairwise nonoverlapping subsurfaces of $S$. This fact is easily verified since such subsurfaces can be realized simultaneously as either disjoint or nested on $S$ (see the proof of Theorem 6.10 in \cite{MM2}).

Let $\mu$ be a symmetric probability measure on $\Mod(S)$ whose support is finite and generates $\Mod(S)$. We show that there is a $k$ such that $\overline{\mu} = \mu^{*k}$ is admissible. Here, $\mu^{*k}$ is the $k$-fold convolution power of $\mu$, which by definition is
\[
\mu^{*k}(g)  = \sum_{x_1\ldots x_k = g}\mu(x_1)\cdots \mu(x_k). 
\]
From this, Theorem \ref{thm:examples} for $G = \Mod(S)$ follows immediately from Theorem \ref{thm:main} (and the fact that the projections are $L$--Lipschitz). Let $Y_0$ and $\lambda$ be as above and fix $f \in \Mod(S)$ which acts as a pseudo-Anosov on $Y_0$ such that $f^2$ has translation length at least $2B$ in $\mathcal C(Y_0)$. Further, fix a pseudo-Anosov $g \in \Mod(S)$ whose translation length on $\C(S)$ is at least $3L +1$.
Choose $k$ so that, up to replacing $f$ and $g$ with appropriate powers, $f$ and $g$
 are in the support of $\overline{\mu} = \mu^{*k}$. Note we also have that $f^{-1},g^{-1} \in \mathrm{supp}(\overline \mu)$.
 
Now the first condition of Definition \ref{defn:admissible} hold for $h_1 = g^{-1}, h_2 = g$ since if both $gY_0$ and $g^{-1}Y_0$ fail to overlap some surface $Z$ and $\gamma$ is a boundary component of $Y_0$, then 
\[
d_S(\gamma, g^2\gamma) = d_S(g^{-1}\gamma, g\gamma) \le d_S(g^{-1}\gamma,\partial Z) + d_S(\partial Z, g\gamma) \le 3L,
\]
contradicting our choice of $g$. 
The second condition of Definition \ref{defn:admissible} is satisfied for $x_1=f,x_2=f^{-1}$, while the third condition holds since the support of $\overline{\mu}^{*n}$ contains $f^n$. The final condition of Definition \ref{defn:admissible} is easily deduced from the fact that, by \cite[Theorem 1.2]{Maher-explinprog}, the random walk makes linear progress with exponential decay in $\mathcal C(S)$.

\subsection{Quasiconvex subgroups}

Consider the setup of a quasiconvex subgroup $H$ of a hyperbolic group $G$, with $\mathcal S$ the family of its cosets. The first 3 properties in Definition \ref{defn:proj_system} are easy. The relation $\pitchfork$ is having bounded projection onto each other, so that the fourth item follows from basic hyperbolic geometry. The final item follows from finiteness of width \cite{GMRS}.

As in the case of mapping class groups, the proof of admissibility uses a hyperbolic space, $X$, that we now define. We let $S$ be any finite generating set of $G$, and consider the Cayley graph $X$ of $G$ with respect to the infinite generating set $S\cup H$.

We could not find a reference for the following statement, despite it being implicit in several papers. Recall that an action on a hyperbolic space is non-elementary if there exist two loxodromic elements with no common limit point at infinity.

\begin{lemma}
 $X$ is hyperbolic and the action of $G$ on $X$ is non-elementary.
\end{lemma}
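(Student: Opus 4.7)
My plan treats the two conclusions separately.

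For hyperbolicity of $X$, I would identify $X$ with a $G$-equivariantly quasi-isometric coned-off Cayley graph $\hat X$ obtained from $\mathrm{Cay}_S(G)$ by attaching over each coset $fH$ a cone vertex joined by an edge to every element of $fH$. Any two elements of a common coset are at distance $1$ in $X$ (via an $H$-labeled edge) and at distance $2$ in $\hat X$ (through the cone point), so the natural inclusion is a $G$-equivariant quasi-isometry. Since $H$ is quasiconvex in $G$, the family $\{fH\}_{f \in G}$ is a $G$-invariant family of uniformly quasiconvex subgraphs of $\mathrm{Cay}_S(G)$. I would then invoke the standard theorem of Kapovich--Rafi that coning off a uniformly quasiconvex family in a hyperbolic graph yields a hyperbolic graph, concluding that $\hat X$, and therefore $X$, is hyperbolic.

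For non-elementarity, I first reduce to the case $G$ non-elementary (if $G$ is virtually cyclic then an infinite-index quasiconvex subgroup is finite, and $X$ is quasi-isometric to $\mathrm{Cay}_S(G)$). The goal is to exhibit two loxodromic elements on $X$ with disjoint fixed-point pairs in $\partial X$. The crucial step is producing one loxodromic element: I would choose $g \in G$ loxodromic on $\partial G$ whose two boundary fixed points avoid the union $\bigcup_{f \in G} f \Lambda H$, so that the axis of $g$ in $\mathrm{Cay}_S(G)$ has uniformly bounded projection to each coset $fH$. The axis then visits each coset for bounded length, and even after coning off one obtains $d_X(1, g^n) \gtrsim n$, certifying loxodromicity of $g$ on $X$. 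A ping-pong argument applied to two such elements with distinct fixed-point pairs in $\partial G$ then produces two independent loxodromics on $X$, giving a non-elementary action.

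The main obstacle is the existence of a loxodromic $g \in G$ whose boundary fixed points avoid $\bigcup_f f \Lambda H$. The cleanest route uses that $\Lambda H$ is a proper closed subset of $\partial G$ with empty interior (since $H$ is quasiconvex of infinite index, provable via minimality of the $G$-action on $\partial G$ applied to the closure of $\bigcup_f f\Lambda H$); then $\bigcup_f f \Lambda H$ is meager by Baire category in $\partial G$, and one can extract a loxodromic pair of fixed points avoiding this meager set. Alternatively, the finite-height and bounded-packing theorems for quasiconvex subgroups (Gitik--Mitra--Rips--Sageev, Hruska--Wise) give a more direct route to showing that generic loxodromics in $G$ have axes of bounded projection to every coset of $H$. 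Either path requires some care, but this is where essentially all the technical content of the lemma lies.
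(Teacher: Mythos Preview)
Your hyperbolicity argument is essentially the paper's: both invoke the Kapovich--Rafi principle that coning off a uniformly quasiconvex family in a hyperbolic graph yields a hyperbolic graph (the paper additionally cites Gersten for hyperbolicity, and extracts from Kapovich--Rafi the fact that quasigeodesics in $G$ map to unparameterized quasigeodesics in $X$, which it then uses to exclude parabolic behaviour).

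For non-elementarity your route diverges from the paper's, and the Baire step as written does not close. Knowing that $\bigcup_f f\Lambda H$ is meager only tells you its complement $U$ is comeager; but the set of fixed-point pairs of loxodromic elements of $G$ is countable, so nothing prevents every such pair from landing inside the meager set. A countable dense set can sit entirely inside a meager set, so you cannot ``extract a loxodromic pair of fixed points avoiding this meager set'' from Baire category alone. Your alternative via finite height is a workable line: if a loxodromic $g$ were elliptic on $X$ its axis would lie in a bounded neighbourhood of a single coset $fH$, forcing $g^{\pm\infty}\in f\Lambda H$ and hence a power of $g$ into a conjugate of $H$; one can then combine this with a free subgroup of $G$ and the GMRS height/width bound to reach a contradiction. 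But that argument needs to actually be carried out, and it is precisely where the content of the lemma lives.

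The paper avoids boundary dynamics entirely. After observing, via the unparameterized-quasigeodesic property, that every element of $G$ acts on $X$ either elliptically or loxodromically, it cites two results of Minasyan on products in hyperbolic groups: one supplies an element $x\in G$ not conjugate into $H$, and the second (applied with $K=\langle x\rangle$) produces an element $g$ that cannot act elliptically on $X$, hence is loxodromic. A second, independent loxodromic $g'$ is then obtained by rerunning the argument over the larger generating set $S\cup H\cup\langle g\rangle$: any $g'$ loxodromic on that further coned-off graph is loxodromic on $X$ and cannot share a limit point with $g$. This is less self-contained than your proposed approach, but it reduces the crux to black-box citations rather than an argument one must build from scratch.
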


\begin{proof}
 By \cite[Theorem 6.4]{Gersten}, $X$ is hyperbolic. In fact, by \cite[Proposition 2.6]{KapovichRafi} quasigeodesics in $G$ map to unparameterized quasigeodesics in $X$. In particular, if $g$ is any element of $G$, then the action of $g$ on $X$ is either elliptic or loxodromic. The combination of \cite[Theorem 1-(b)]{Minasyan:products} (which provides an element $x\in G$ not conjugate into $H$) and \cite[Theorem 2]{Minasyan:products} (for $K=\langle x\rangle$) prove that there exists an element of $G$, that we denote $g$, that cannot act elliptically, and hence it acts loxodromically. Similarly, we can apply the same reasoning to find an element $g'$ that acts loxodromically on the Cayley graph of $G$ with respect to $S\cup H\cup\langle g\rangle$, and in particular it will also act loxodromically on $X$. It is easy to see that $g$ and $g'$ cannot have a common limit point at infinity.
\end{proof}

A convolution power of $\mu$ will have support containing an infinite order element of the quasiconvex subgroup, as well as an element with large translation distance on $X$, easily implying the first three items of Definition \ref{defn:admissible}.

We can now apply the linear progress result in \cite{MaherTiozzo} to get that the random walk we are considering makes linear progress with exponential decay in $X$, easily implying the final condition of Definition \ref{defn:admissible} (since if two cosets are far away in $X$ then they are, in particular, far away in $G$ and hence they have bounded projection onto each other).

\subsection{Peripheral subgroups}

Consider the setup of a peripheral subgroup with an undistorted element of a relatively hyperbolic group, with $\mathcal S$ the family of its cosets. The relation $\pitchfork$ in this case is just being distinct, and the needed properties of projections follow from, e.g., \cite{Si-proj}.

 The last item once again uses a hyperbolic space. In this case, the hyperbolic space $X$ is a coned-off graph: If $S$ is a finite generating set for $G$, then $X$ is the Cayley graph with respect to the generating set $S\cup H_1\cup\dots\cup H_n$. The hyperbolicity of $X$ is part of the definition of relative hyperbolicity from \cite{Farb}, and the action is non-elementary due to results in \cite{Os-elem}. More precisely, \cite[Lemma 4.5]{Os-elem} gives a loxodromic element $g$ for the action. Moreover, $g$ is contained in an elementary subgroup $E(g)$ that can be added to the list of peripheral subgroups \cite[Corollary 1.7]{Os-elem}, so that one can find a loxodromic element $g'$ with respect to the new list of peripherals. Similarly to the hyperbolic group case, it is easy to see that $g,g'$ are the required pair of loxodromic elements.

Admissibility now follows similarly to the other cases, using a sufficiently large power of the undistorted element of $H$ (any undistorted element of a group $H$ has powers with arbitrarily large translation distance on the Cayley graph of $H$).

\subsection{$\Out(F_n)$ and subfactor projections}
Subfactor projections were introduced by \linebreak Bestvina--Feighn in \cite{BFproj} and refined in \cite{Taylproj}. We refer to these references for definitions and details. 

For a rank $\ge 2$ free factor $Y_0$ of $F_n$, let $\pi_{Y_0}$ denote the subfactor projection to $\C(Y_0)$, the free factor graph of $Y_0$. 
Set $G = \Out(F_n)$ and let $\mathcal{S} = \{g Y_0 : g \in G\}$. Finally, let $\lambda$ be a $F_n$--marked graph, i.e. a graph with a fixed isomorphism $F_n \to \pi_1(\lambda)$, containing a subgraph $\lambda_{Y_0}$ with $\pi_1(\lambda_{Y_0}) = Y_0$. 

For any $Z \in \mathcal{S}$, define $\pi_Z \colon G \to \C(Z)$ by $\pi_Z(g) = \pi_Z(g\lambda)$. Here, $g \lambda$ denotes the image of $\lambda$ under $g \in G$ with respect to the natural left action of $G$ on the set of marked graphs. Free factors $X$ and $Z$ of $F_n$ are said to be disjoint if, up to conjugation, $F_n = W*X*Z$ for some (possibly trivial) free factor $W$. The factors $X$ and $Z$ overlap, written $X \pitchfork Z$, if they are neither disjoint nor nested. By \cite[Theorem 1.1]{Taylproj}, when free factors $X$ and $Z$ overlap, there is a well-defined coarse projection $\pi_Z(X) \subset \C(Z)$ and that the natural version of the Behrstock inequality holds (see also \cite[Proposition 4.18]{BFproj}). This, together with the fact that $d_Z(Y_0, \lambda)$ is bounded for all $Z \in \mathcal{S}$ with $Y_0 \pitchfork Z$, implies the fourth condition of Definition \ref{defn:proj_system}. Finally, condition $(5)$ follows, for example, from \cite[Lemma 4.14]{BFproj}. Hence, subfactor projections form a projection system.

From this, it follows just as in the situation of $\Mod(S)$ that if $\mu$ is a symmetric probability measure on $\Out(F_n)$ whose support is finite and generates $\Out(F_n)$, then $\overline \mu = \mu^{*k}$ is admissible for some $k\ge1$. The only needed modifications are that one chooses $g$ to be fully irreducible with large translation length on the free factor complex of $F_n$,
chooses $f$ to fix $Y_0$ and have large translation length on $\C(Y_0)$, and applies the general linear progress result of \cite[Theorem 1.2]{MaherTiozzo}.

\section{Shortest curves in random mapping tori} \label{sec:shortest}
Let $S$ be a closed connected orientable surface of genus at least $2$. Hereafter, we assume that the reader is familiar with the subsurface projection machinery of \cite{MM2} and refer them to the terminology established in Section \ref{sec:mcg}.
Throughout this section, we fix a symmetric measure $\mu$ whose support is finite and generates $\Mod(S)$, and let $(w_n)$ denote the random walk driven by $\mu$. In this setting, we have that for almost every sample path $(w_n$), $w_n$ is pseudo-Anosov for sufficiently large $n\ge 0$ \cite{Maher-explinprog}. We will use this fact freely without further comment.

For a pseudo-Anosov $f \in \Mod(S)$, let $\lambda^+(f)$ and $\lambda^-(f)$ denote its stable and unstable laminations. For a subsurface $Y$ of $S$, define
\[
d_Y(f) = d_Y(\lambda^+(f), \lambda^-(f)),
\]
to be the distance in the curve graph of $Y$ between the projections of the stable and unstable laminations of $f$. Note that by invariance of the laminations, $d_{f^iY}(f) = d_Y(f)$ for all $i \in \Z$.
See \cite{MM2, ECL1} for details. When $Y$ is an annulus about the curve $\alpha$, we use the notation $d_\alpha$ rather than $d_Y$. For a curve $\alpha$ in $S$, let $\mathcal{Y}_\alpha$ be the (nonannular) subsurfaces with $\alpha$ as a boundary component. Finally, in the following proposition we will need to make use of the \emph{bounded geodesic image theorem} of \cite{MM2}. This states that there is a constant $M >0$ such that if $\gamma$ is a geodesic in the curve graph $\C(S)$ that does not meet the $1$--neighborhood of $\partial X$ for some subsurface $X$ of $S$, then the diameter of the projection of $\gamma$ to $\C(X)$ is at most $M$.

When writing expressions such as $\sum_{Z \subset X}$, for $X$ a subsurface, we mean that we are summing over all isotopy classes of (essential) subsurfaces of $X$. Recall that the notation $\{\{x\}\}_L$ denotes $x$ if $x\geq L$ and $0$ otherwise. Using Theorem \ref{thm:examples} for mapping class groups, we can prove the following: 

\begin{prop} \label{prop:pA_sub_growth}
For the random walk $(w_n)$ on $\Mod(S)$ as above, there are $K_2, C_2 \ge 1$ such that 
\begin{enumerate}
\item 
$\matP \left( \sup_{X \subsetneq S } \sum_{Z \subset X} \{\{d_Z(w_n)\}\}_{K_2} \le C_2 \cdot \log(n) \right) \to 1, \: \text{ as } n \to \infty.$

\item 
$\matP \left( \exists \: \mathrm{a} \: \mathrm{curve}
\: \alpha_n \: \mathrm{in} \; S : d_{\alpha_n} (w_n) \ge C_2^{-1} \cdot \log(n) \; \mathrm{and} \; 
\sup_{X \in \mathcal{Y}_{\alpha_n}} d_X(w_n) \le K_2 \right) \to 1, \linebreak
\text{ as } n \to \infty.$
\end{enumerate}
\end{prop}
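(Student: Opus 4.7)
My plan is to invoke Theorem \ref{thm:examples} for the appropriate families of subsurfaces of $S$ and to translate the resulting control on the marking projections $d_Z(1, w_n)$ to the axis projections $d_Z(w_n) = d_Z(\lambda^+(w_n), \lambda^-(w_n))$ by means of the $w_n$-invariance of $\lambda^{\pm}$ combined with the bounded geodesic image theorem. The basic comparison I would rely on is that, for every subsurface $Z$,
\[
d_Z(w_n) \le \max_{i \in \Z} d_{w_n^i Z}(1, w_n) + O(1);
\]
taking the supremum over $Z$ and noting that the right-hand side is bounded above by $\sup_W d_W(1, w_n)$ immediately gives the pointwise bound $\sup_{Z \subsetneq S} d_Z(w_n) \le C \log(n)$ with high probability.

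For part (1), on top of this pointwise bound, I would use a relative distance formula in the spirit of Proposition \ref{lowerbound}, but summing only over subsurfaces inside a proper $X$: for any two markings or filling laminations $\eta_1, \eta_2$,
\[
\sum_{Z \subseteq X} \{\{d_Z(\eta_1, \eta_2)\}\}_K \le C' \cdot d_X(\eta_1, \eta_2) + C'',
\]
which should follow from the Masur--Minsky hierarchy inside $X$ \cite{MM2}. Applying this with $\eta_{\pm} = \lambda^{\pm}(w_n)$ and combining with the pointwise bound on $d_X(w_n)$ gives the desired uniform bound $\sum_{Z \subset X}\{\{d_Z(w_n)\}\}_{K_2} \le C_2 \log(n)$ with high probability, over all proper $X$.

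For part (2), I would apply Theorem \ref{thm:examples} to the family of annular subsurfaces to produce, with high probability, a curve with large annular projection. In fact, the lower-bound construction of Section \ref{sec:lowerbound}, specialized to an annular $Y_0$ with core curve $\alpha_0$ and with $x_n$ a high power of the Dehn twist about $\alpha_0$, yields a linear-in-$n$ family of candidate curves $w_i\alpha_0$ with $d_{w_i\alpha_0}(w_n) \ge c \log(n)$. Crucially, a Dehn twist about $\alpha_0$ acts trivially on the curve complex of any non-annular subsurface having $\alpha_0$ as a boundary component, so the insertion of $x_n$ does not alter the projection of the walk to such subsurfaces. Hence, for these candidate curves, the projections $d_X(w_n)$ with $X \in \mathcal{Y}_{w_i\alpha_0}$ are controlled by the ``base'' random walk; a union bound over the bounded set $\mathcal{Y}_{w_i\alpha_0}$ combined with the exponential decay estimate of Proposition \ref{prop:exp_decay_proj} then shows that, with high probability, at least one candidate curve $\alpha_n$ satisfies $d_X(w_n) \le K_2$ for every $X \in \mathcal{Y}_{\alpha_n}$.

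The hardest step will be making the comparison between $d_Z(w_n)$ and the single-step projections $d_{w_n^i Z}(1, w_n)$ precise, with additive constants uniform in $Z$ and independent of $w_n$. In particular, one has to check, using the bounded geodesic image theorem and the structure of pseudo-Anosov axes, that only a bounded number of iterates $i$ contribute non-negligibly to the axis projection to $\C(Z)$, so that the ``max'' above really does control $d_Z(w_n)$ rather than a long sum of incremental contributions.
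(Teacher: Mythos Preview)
Your overall strategy---prove the statements first for the marking projections $d_Z(1,w_n)$ and then transfer them to $d_Z(w_n)=d_Z(\lambda^+(w_n),\lambda^-(w_n))$ via the bounded geodesic image theorem and the structure of the $w_n$--axis---is exactly the paper's, and your comparison $d_Z(w_n)\le 3\max_i d_{w_n^iZ}(1,w_n)+O(1)$ is essentially what the paper establishes. But both parts of your implementation have genuine gaps.

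For part~(1), the ``relative distance formula'' you invoke,
\[
\sum_{Z\subseteq X}\{\{d_Z(\eta_1,\eta_2)\}\}_K \;\le\; C'\,d_X(\eta_1,\eta_2)+C'',
\]
is false as stated: the Masur--Minsky formula bounds the left side by the \emph{marking} (or pants) distance in $X$, not by the curve-complex distance $d_X=d_{\C(X)}$. A large power of a Dehn twist about some $\beta\subset X$ makes $d_\beta$ arbitrarily large while keeping $d_X$ bounded by~$2$. The paper does not attempt such a reduction; it cites \cite[Lemma~5.12]{Si-tracking} for the $d_Z(1,w_n)$ version of item~(1) and then applies the axis comparison.

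The more serious gap is in part~(2). The set $\mathcal Y_{\alpha}$ of nonannular subsurfaces having $\alpha$ as a boundary component is \emph{infinite}, so your ``union bound over the bounded set $\mathcal Y_{w_i\alpha_0}$'' together with Proposition~\ref{prop:exp_decay_proj} cannot produce a uniform constant bound on $\sup_{X\in\mathcal Y_{\alpha_n}}d_X(w_n)$. You are right that inserting a twist power $x_n$ about $\alpha_0$ does not move projections to $X\in\mathcal Y_{\alpha_0}$; but the prefix $w_i$ and the suffix $w_{i+k}^{-1}w_n$ certainly can, and nothing in your sketch controls $d_X(1,w_i)$ or $d_X(w_{i+k},w_n)$ for \emph{all} such $X$ simultaneously. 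The paper's key idea, which you are missing, is to replace the single twist block by
\[
x_n \;=\; s_n\,g\,t_n\,g\,u_n,
\]
where $s_n,t_n,u_n$ are large twist powers about $\alpha$ and $g$ is a fixed pseudo-Anosov with $d_{\C(S)}(x,gx)\ge 10$. This creates \emph{three} large annular projections along the walk; the two outer ones act as barriers which, via the Behrstock inequality, force every $X$ disjoint from the \emph{middle} curve $w_is_ng\cdot\alpha$ to satisfy $d_X(1,w_n)\le O(1)$ \emph{deterministically} once the three projections are in place. That buffering mechanism---not a probabilistic union bound---is what yields the uniform constant $K_2$.
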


\begin{proof}
The main idea of the argument is that the quantities $d_Y(w_n)$ are typically closely related to the distances $d_Y(1,w_n)$, as studied in the first part of the paper. Hence, we first prove the corresponding statements for $d_Y(1,w_n)$, and then we will translate them into statements about $d_Y(w_n)$.
Just as in Section \ref{sec:mcg}, it suffices to assume that the measure $\mu$ is admissible.

Item 1 for the $d_Y(1,w_n)$ is  \cite[Lemma 5.12]{Si-tracking}.

Item 2 for the $d_Y(1,w_n)$ is a slight variation of Theorem \ref{thm:lower}, and can be shown by modifying the proof as follows. We fix $Y$ to be an annular subsurface with core curve $\alpha$. For any subsurface $Z \subset S$ (including the case $Z =S$), $\pi_Z(g) \subset \C(Z)$ is the subsurface projection of $g \lambda$, where $\lambda$ is a fixed marking chosen to contain $\alpha$. As in Section \ref{sec:mcg}, we have $\mathrm{diam}(\pi_Z(g))\leq L$ for all $Z\subseteq S$.
Recall that the first step we took in Section \ref{sec:lowerbound} is to choose some $x_n\in G = \Mod(S)$ whose most important property is that $d_Y(1,x_n)\geq \epsilon_2\log(n)$. In this case, we want 3 large projections instead of 1, and 2 of them will serve as ``buffer'' for the ``middle'' one. We now fix once and for all $g \in\Mod(S)$ so that $d_{\mathcal C(S)}(x,gx)\geq 10$ for all $x \in \C(S)$ (in particular $g^{\pm 1} Y' \pitchfork Y$ for $Y'=Y$ or $Y'$ disjoint from $Y$), and choose $x_n=s_ngt_ngu_n$ so that
\begin{enumerate}
\item $p_n:=\matP(w_k=x_n)\geq n^{\epsilon_1\log c}$,
 \item $s_n,t_n,u_n$ are powers of Dehn twists around $\alpha$,
 \item $d_Y(1,s_n),d_Y(1,t_n),d_Y(1,u_n)\geq \epsilon_2\log n$, for some fixed $\epsilon_2>0$.
\end{enumerate}

We then define $W_i,L_i,R_i,Y_i$ as in Subsection \ref{subsec:proof_lower}. Similarly to the claim below Figure \ref{Fig:subpaths}, one can show that if $Y_i=1$ and $n$ is sufficiently large, then $d_Z(1,w_n)$ is $\log$--large whenever $Z$ is one of the annuli $w_iY,w'_iY=w_is_ngY$, or $w_is_ngt_ngY$, and that, moreover, all projection distances to subsurfaces $X$ contained in the complement of $w'_iY$ are uniformly bounded. In fact, we are in the situation where $w_n$ can be written as a product of group elements $g_1g_2\dots g_7$ (where $g_1 = w_i, \: g_2=s_n,\: g_3=g,\: g_4=t_n, \: g_5=g, \: g_6=u_n$) so that for $k$ odd we have that $g_kY\pitchfork Y$ and $g_k$ has controlled projection on $Y$, while for $k$ even we have that $g_k$ is a large power of a Dehn twist supported of $Y$. Just as before, in this situation the projections created by the even $g_k$ ``persist.'' Similarly, if $X$ is contained in the complement of $w'_iY=g_1g_2g_3Y$, then, first of all, $X \pitchfork g_1Y$ and $X \pitchfork g_1\dots g_5Y$ because of our hypothesis on $g$. 
Then, by the triangle inequality, 
\[
d_X(1,w_n) \le d_X(1,g_1) + d_X(g_1 , g_1\ldots g_5) + d_X(g_1\ldots g_5, w_n)+ 2L,
\]
where the middle term is at most $d_{X'}(g^{-1},g) + 2L$, for $X'$ a subsurface contained in the complement of $Y$. Indeed, since $X$ is disjoint from $g_1g_2g_3Y$, $X' = (g_1g_2g_3)^{-1}X$ is disjoint from $Y$. Hence, $d_X(g_1 , g_1\ldots g_5) = d_{X'}(g_3^{-1}g_2^{-1}, g_4g_5) = d_{X'}(g^{-1},g)$, where we have used that $g_2$ and $g_4$ are twists about $\alpha$ and that $\alpha \subset \lambda$, so that $\mathrm{diam}(\pi_{X'}(\lambda)\cup \pi_{X'}(g_2^{-1}\lambda))\leq 2L$ and $\pi_{X'}(g_4g_5)=g_4\pi_{X'}(g_5)=\pi_{X'}(g_5)$.
Since this middle term is uniformly bounded, we just need to bound the other two terms. We will focus on the first term, as the last term is handled similarly. 
Now if $d_X(1,g_1)$ is larger than $B+L$ (where $B$ is from the Behrstock inequality stated in Section \ref{sec:mcg}) then 
$d_X(1,g_1 \partial Y) $ is larger than $B$ and so $d_{g_1Y}(1,\partial X) \le B$. But $\partial X$ and $w_i'\partial Y$ are disjoint and so the Lipschitz property of subsurface projections implies that $d_{g_1Y}(1,w_i') \le B+L$, contradicting that the log--large projection to $g_1Y$ persists.

With our current setup, Lemma \ref{lem:exp1} holds as stated, for the same reason that  $W_i,L_i,R_i$ are independent, and Proposition \ref{prop:exp2} also holds with a similar proof based on alternating small and large projections as above. These are all the needed ingredients to conclude the proof that, given a large $n$, with high probability there exists $i$ with $Y_i=1$, as we did for Theorem \ref{thm:lower}. 

We now translate the statement for the $d_Y(1,w_n)$ into a statement for the $d_Y(w_n)$. 
We being by showing that, with probability going to $1$, for any proper subsurface $X$ of $S$, we have $d_X(w_n) \le 3 \sup_i d_{w_n^iX}(1,w_n)+ 2M+2L$, were $M$ is the constant from the bounded geodesic image theorem. 
This in particular will prove item $1$. 

We regard $w_n$ as a product of two shorter random walks $u_n$, $v_n$, of length approximately $n/2$ each. As explained in, e.g., \cite[Lemma 23]{MaherSisto}, the following is a consequence of results in \cite{Maher} and \cite{MathieuSisto}. With probability going to 1 as $n$ goes to infinity, 
\begin{itemize}
\item $w_n$ acts hyperbolically on $\mathcal C(S)$ with (quasi-)axis $A_n$,
\item writing  $\gamma = [\pi_S(1),\pi_S(w_n)]$, $\gamma'=[\pi_S(1),\pi_S(u_n)]$, and $\gamma''= u_n \cdot [\pi_S(1),\pi_S(v_n)] $, the axis $A_n$ has Hausdorff distance $O(\log n)$ from both 
\[
\bigcup w_n^k \cdot \gamma \quad \text{and} \quad \bigcup w_n^k \cdot (\gamma' \cup \gamma''),
\]
where with a slight abuse of notation we regarded the various $\pi_S(\cdot)$ as vertices of $\mathcal C(S)$,
\item and each of $\gamma$, $\gamma'$, and $\gamma''$ have length at least $\epsilon' n$ for some uniform $\epsilon' >0$. 
\end{itemize}
Going forward, we will assume that $w_n$ has this form.

Now if $d_X(w_n) \ge 2M$, then the bounded geodesic image theorem implies that $\partial X$ lies within a uniformly bounded distance from the axis $A_n$. Hence, there is an $i \in \Z$ such that the boundary of $X_i= w_n^i X$ has bounded distance from $\gamma$, and we see that
\begin{align*}
d_X(w_n) &= d_{X_i}(w_n) \\
&\le d_{X_i}(\lambda^-(w_n), w_n^{-1}) + d_{X_i}(w_n^{-1},w_n^2) + d_{X_i}(w_n^2,\lambda^+(w_n))+2L \\
&\le M + 3 \sup_i d_{X_i}(1,w_n) + M+2L,
\end{align*}
where the last inequality follows from the triangle inequality and another applications of the bounded geodesic image theorem. In a bit more detail, with our setup, the geodesics from $w_n^{-1}$ to $\lambda^-(w_n)$ and from $w_n^2$ to $\lambda^+(w_n)$ have $\C(S)$--distance from $\partial X_i$ growing linearly in $n$ since the length of $\gamma$ is greater than $\epsilon'n$ and $d_{\C(S)}(w_n^i ,A_n) = O(\log(n))$ for $i\in \Z$. Hence, the bounded geodesic image theorem implies that, with probability going to $1$, their images in $\C(X_i)$ have diameter at most $M$.

Let us now prove item 2. Using what we have already shown applied to the random walk $u_n$, with probability going to 1 as $n$ goes to infinity, there is a curve $\alpha$ on $S$ such that for some $1\le i < n$ and $\epsilon, K>0$:
\begin{itemize}
 \item the axis of $w_n$ has the form described above,
 \item for $h\in \{u_i,u_is_ng, u_is_ngt_ng\}$, each projection $d_{h\alpha}(1,u_n)$ is greater than $\epsilon\log (n)$ and $d_{X}(1,u_n)\leq K$ for all $X$ disjoint from $u_is_ng \cdot \alpha$, and 
 \item for $h\in \{u_i,u_is_ng, u_is_ngt_ng\}$, each projection $d_{h\alpha}(v_n^{-1},1)$ and $d_{h\alpha}(u_n,w_n)$ is bounded by $\epsilon/10 \log(n)$.
\end{itemize}

The third item holds because of an application of Proposition \ref{prop:exp_decay_proj} and a simple conditioning argument. We explain the bound on the term $d_{h\alpha}(u_n,w_n)$ and leave the other to the reader.  Let $\mathcal Y_n=\{f \in\Mod(S): \exists Y=Y(f){\rm \ annulus\ with\ }  d_Y(1,f)\geq\epsilon\log n\}$. The probability that the third item holds is at least
\begin{align*}
&\sum_{f\in \mathcal Y_n} \matP(d_{Y(f)}(u_n,w_n)\leq \epsilon/10 \log (n))\matP(u_n=f) \\
&=
\sum_{f\in \mathcal Y_n} \matP(d_{f^{-1}Y(f)}(1,v_n)\leq \epsilon/10 \log(n))\matP(u_n=f)
\end{align*}
The terms $\matP(d_{f^{-1}Y(f)}(1,v_n)\leq \epsilon/10 \log n)$ are arbitrarily close to $1$ as $n$ goes to $\infty$ by Proposition \ref{prop:exp_decay_proj}, and $\sum_{f\in \mathcal Y_n} \matP(u_n=f)=\matP(u_n\in\mathcal Y_n)$ goes to $1$ by the version of item 2 that we proved above (if $\epsilon$ is sufficiently small).

We are now ready to conclude: by the second and third bullet points and the triangle inequality, for any $n$ large enough we have
\[
d_{h\alpha}(v_n^{-1},w_n) \ge 8\epsilon/10 \log(n) - 2L,
\]
for each $h\in\{u_i,u_is_ng, u_is_ngt_ng\}$ with probability going to $1$, as $n \to \infty$. Just as in our previous application of the bounded geodesic image theorem, since $\gamma''$ has length at least $\epsilon' n$, each of $h\alpha$ (for $h$ as above) is at bounded distance from $\gamma'$, and the distances from both $v_n^{-1}$ and $w_n$ to the axis $A_n$ is $O(\log(n))$, we get that $d_{h\alpha}(w_n) \ge  8\epsilon/10 \log(n) - 2L - 2M$. Now exactly as in the first part of the proof, the $3$ large projections to $h \alpha$ for $h \in  \{u_i,u_is_ng, u_is_ngt_ng\}$ guarantee that $d_X(w_n)$ is uniformly bounded whenever $X$ is a subsurface of $S$ disjoint from $u_is_ng \cdot \alpha$. This completes the proof.
\end{proof}

Informally, Proposition \ref{prop:pA_sub_growth} controls the size of subsurface distances along the axis of the random pseudo-Anosov $w_n$. These distances in turn control the lengths of curves in the hyperbolic manifold $M(w_n)$.

\subsection*{Length estimates}
Following Minsky \cite{ECL1}, we set
\[
\omega_\alpha(f) = d_\alpha(f) + i \cdot \big(1 + \sum_{Y \in \mathcal{Y}_\alpha}\{\{ d_Y(f) \}\}_{K_3} \big),
\]
for some constant $K_3 \ge 0$ as determined in \cite{ECL1}. By \cite[Section 9.5]{ECL1}, we may suppose that $K_3 \ge K_2$ for $K_2$ as in Proposition \ref{prop:pA_sub_growth}.

For a pseudo-Anosov $f \in \Mod(S)$, the complex length of $\alpha$ in the hyperbolic mapping torus $M(f)$ is
\[
\lambda_\alpha(f) = \ell_\alpha(f) +  i \cdot \theta_\alpha(f),
\]
where $\ell_\alpha(f)$ is the usual hyperbolic length of $\alpha$ in $M(f)$ and $\theta_\alpha(f) \in (-\pi,\pi]$ is the rotational part of $\alpha$. Recall that we are interested in smallest $l_\alpha (w_n)$ along the random walk $(w_n)$.


We regard $\omega_\alpha(f)$ and $2\pi i / \lambda_{\alpha}(f)$ as points in the upper-half plane model of the hyperbolic plane $\mathbb H$. 
The following theorem is part of the Brock--Canary--Minsky proof of Thurston's Ending Lamination Conjecture \cite{ELC2}:

\begin{thm}[Length Bound Theorem \cite{ELC2}] \label{th:ELC}
There are $D,\epsilon \ge 0$, depending only of $S$, such that for any pseudo-Anosov $f \in \Mod(S)$ and any curve $\alpha$ in $S$ with $\ell_\alpha(f) \le \epsilon$:
\[
d_{\mathbb{H}} \left ( \omega_\alpha(f), \frac{2\pi i}{\lambda_\alpha(f)} \right ) \le D.
\]
Moreover, if $|\omega_\alpha| \ge M$, for $M\ge0$ depending only on $S$, then $\ell_\alpha(f) \le \epsilon$.
\end{thm}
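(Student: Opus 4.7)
The plan is to deduce the theorem from Minsky's \emph{model manifold} construction and the bilipschitz model theorem of Brock--Canary--Minsky, which together form the main technical content of the Ending Lamination Theorem. The statement is really a statement about doubly degenerate Kleinian surface groups, so the first move is to transfer to that setting.

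First I would pass to the infinite cyclic cover $\widetilde{M}(f) \to M(f)$ corresponding to the fiber subgroup $\pi_1(S) \le \pi_1(M(f))$. By Thurston's double limit theorem, $\widetilde{M}(f)$ carries a doubly degenerate hyperbolic structure on $S \times \mathbb{R}$ whose two ending laminations are precisely $\lambda^{\pm}(f)$. Any simple closed curve $\alpha \subset S$ has the same complex length $\lambda_\alpha$ in $M(f)$ as in $\widetilde{M}(f)$, and the subsurface projection data $d_Y(f) = d_Y(\lambda^+(f), \lambda^-(f))$ coincides with the projection data of the end invariants $\nu^{\pm}$ of $\widetilde{M}(f)$. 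So the assertion reduces to a uniform statement in the doubly degenerate Kleinian surface group setting.

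Next I would invoke the bilipschitz model theorem \cite{ELC2}: there is a combinatorial model $M_\nu$, built from the Masur--Minsky hierarchy of tight geodesics joining $\nu^{+}$ to $\nu^{-}$, together with a bilipschitz homeomorphism $M_\nu \to \widetilde{M}(f)$ whose constants depend only on $S$. In this model, every curve $\alpha$ for which $|\omega_\alpha(f)|$ is sufficiently large appears as the core of an explicit solid-torus block $U_\alpha \subset M_\nu$, and the conformal geometry of $U_\alpha$ is prescribed by $\omega_\alpha(f)$: the real part $d_\alpha(f)$ records the cumulative Dehn twisting around $\alpha$ (meridional data), while the imaginary part $1 + \sum_{Y \in \mathcal{Y}_\alpha} \{\{ d_Y(f) \}\}_{K_3}$ records the combinatorial length of the ``block'' subordinate to $\alpha$ in the hierarchy, which determines the tube depth/radius.

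The translation to complex length then comes from standard Margulis tube geometry: for any sufficiently short geodesic $\alpha$ in a hyperbolic $3$--manifold, the conformal modulus of its Margulis tube equals $2\pi i / \lambda_\alpha(f)$ up to a uniform additive error in the hyperbolic metric on $\mathbb{H}$, by the Fermi-coordinate description of a thin neighborhood of a short geodesic (see \cite{ECL1}). Combining this with the bilipschitz equivalence $M_\nu \to \widetilde{M}(f)$ and the identification of the modulus of $U_\alpha$ with $\omega_\alpha(f)$ (up to bounded multiplicative error in both real and imaginary parts, which corresponds precisely to bounded hyperbolic distance in $\mathbb{H}$) yields $d_{\mathbb{H}}(\omega_\alpha(f), 2\pi i / \lambda_\alpha(f)) \le D$. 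The ``moreover'' clause is the converse: if $|\omega_\alpha(f)| \ge M$, the model contains a tube block $U_\alpha$ of large modulus, hence $\widetilde{M}(f)$ contains a Margulis tube of large modulus with core freely homotopic to $\alpha$, which forces $\ell_\alpha(f) \le \epsilon$.

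The main obstacle is unavoidable: essentially all of the work lies inside the bilipschitz model theorem, and specifically inside the ``projection-coefficient to tube-modulus'' identification, which is the step that actually produces the imaginary part of $\omega_\alpha$ as the depth of a pleated-surface sweepout near $\alpha$. This identification is the technical heart of Minsky's program and relies on the full hierarchy machinery together with Thurston's efficient pleated-surface technology and uniform injectivity estimates. Once this input is taken as a black box, however, the proof of the Length Bound Theorem is the short chain of reductions described above.
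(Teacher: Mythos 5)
This is a cited theorem, not one the paper proves. The paper states it as the ``Length Bound Theorem'' and attributes it directly to Brock--Canary--Minsky \cite{ELC2}; there is no argument in the paper to compare your proposal against, so the relevant question is whether your sketch accurately reflects how the theorem is established in the source.

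Your outline is a reasonable high-level account of the argument in \cite{ELC2}, and the reduction from mapping tori to doubly degenerate Kleinian surface groups via the fiber cover $\widetilde{M}(f)\to M(f)$ is the right move. Two small points are worth flagging. First, $\widetilde{M}(f)$ inherits its complete hyperbolic structure simply by lifting the hyperbolic metric from $M(f)$; Thurston's double limit theorem is an ingredient in the proof of hyperbolization for fibered manifolds, but it is not what directly endows the cover with a metric. The facts you actually need here --- that $\widetilde{M}(f)\cong S\times\mathbb{R}$ is doubly degenerate and that its end invariants are $\lambda^{\pm}(f)$ --- are standard but nontrivial (Thurston, Bonahon, Cannon--Thurston) and should be stated as such. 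Second, you are quite right that the substance of the theorem lies in the ``projection coefficient to tube modulus'' identification inside the bilipschitz model theorem; once that is taken as a black box, your chain of reductions (model tube $U_\alpha$ has Teichm\"uller parameter comparable to $\omega_\alpha$, Margulis tube has parameter comparable to $2\pi i/\lambda_\alpha$ via Minsky's Fermi-coordinate tube estimates, the bilipschitz model map forces the two to agree up to bounded $d_{\mathbb{H}}$ error) is the correct skeleton, and the ``moreover'' clause follows as you say since a large-modulus model tube forces a genuine Margulis tube and hence a short core. But precisely because all of the real content lives inside that black box, your proposal is a faithful summary of the source argument rather than an independent proof.
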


\begin{cor} \label{cor:useful_ELC}
With notation as above and the hypotheses of Theorem \ref{th:ELC}, there is a $D_1 \ge0$ such that
\begin{align}\label{eq:length_est}
D_1^{-1} \cdot \frac{2 \pi \: S_\alpha(f)}{d_\alpha^2(f) + S_\alpha^2(f)}. \le \ell_\alpha(f) \le D_1 \cdot \frac{2 \pi \: S_\alpha(f)}{d_\alpha^2(f) + S_\alpha^2(f)},
\end{align}
where $S_\alpha(f) = 1 + \sum_{Y \in \mathcal{Y}_\alpha}\{\{ d_Y(f) \}\}_{K_3}$.
\end{cor}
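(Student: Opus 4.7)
The plan is to unpack Theorem~\ref{th:ELC} using the explicit form of the hyperbolic metric on the upper half-plane. Under the hypothesis $\ell_\alpha(f)\le\epsilon$, Theorem~\ref{th:ELC} yields $d_{\mathbb H}\bigl(\omega_\alpha(f),\,2\pi i/\lambda_\alpha(f)\bigr)\le D$, and from this single inequality I will extract both the upper and lower bounds on $\ell_\alpha(f)$ by direct computation. The key tool is the standard fact that two points of $\mathbb H$ at hyperbolic distance at most $D$ have imaginary parts in ratio at most $e^D$, and their real parts differ by at most a constant $C_1 = C_1(D)$ times either imaginary part.

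First I would rewrite the complex number $2\pi i/\lambda_\alpha(f)$ in rectangular form:
\begin{equation*}
\frac{2\pi i}{\lambda_\alpha(f)} \;=\; \frac{2\pi\,\theta_\alpha(f)}{\ell_\alpha^2(f)+\theta_\alpha^2(f)} \;+\; i\cdot\frac{2\pi\,\ell_\alpha(f)}{\ell_\alpha^2(f)+\theta_\alpha^2(f)}.
\end{equation*}
Setting $A := \ell_\alpha^2(f)+\theta_\alpha^2(f)$ and comparing with $\omega_\alpha(f) = d_\alpha(f)+i\,S_\alpha(f)$ via the fact above gives
\begin{equation*}
e^{-D}\,S_\alpha(f) \;\le\; \frac{2\pi\,\ell_\alpha(f)}{A} \;\le\; e^{D}\,S_\alpha(f), \qquad \left|\,d_\alpha(f)-\frac{2\pi\,\theta_\alpha(f)}{A}\,\right| \;\le\; C_1\,S_\alpha(f).
\end{equation*}
I would then introduce $u = \ell_\alpha(f)/A$ and $v = \theta_\alpha(f)/A$, so that $u^2+v^2 = 1/A$ and therefore $\ell_\alpha(f) = u/(u^2+v^2)$. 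The two displayed inequalities translate to $u\asymp S_\alpha(f)/(2\pi)$ and $|v - d_\alpha(f)/(2\pi)|\le C_1 S_\alpha(f)/(2\pi)$.

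Finally, the corollary reduces to establishing $u^2+v^2\asymp \bigl(d_\alpha^2(f)+S_\alpha^2(f)\bigr)/(4\pi^2)$; substituting into $\ell_\alpha(f) = u/(u^2+v^2)$ then gives the stated estimate. The only real obstacle (and the only content beyond routine bookkeeping) is the lower bound on $u^2+v^2$, since the additive slop $O(S_\alpha)$ in the estimate for $v$ could in principle cancel the main term $d_\alpha/(2\pi)$. A short case split handles this: when $d_\alpha(f)\le 2C_1 S_\alpha(f)$, the summand $u^2\asymp S_\alpha^2(f)/(4\pi^2)$ alone controls $u^2+v^2$ from below, and simultaneously $d_\alpha^2(f)+S_\alpha^2(f)\asymp S_\alpha^2(f)$; when $d_\alpha(f)>2C_1 S_\alpha(f)$, the bound on $|v|$ forces $|v|\ge d_\alpha(f)/(4\pi)$, so that $v^2\asymp d_\alpha^2(f)/(4\pi^2)$ dominates and matches $d_\alpha^2(f)+S_\alpha^2(f)\asymp d_\alpha^2(f)$. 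Since $S_\alpha(f)\ge 1$ by construction, the resulting constants are uniform, yielding the desired $D_1$.
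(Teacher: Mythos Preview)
Your argument is correct, but the paper takes a shorter route that sidesteps the case split entirely. Instead of comparing $\omega_\alpha(f)$ and $2\pi i/\lambda_\alpha(f)$ directly, the paper first applies the hyperbolic isometry $z\mapsto -1/z$ to both points. This sends $\omega_\alpha(f)$ to a point whose imaginary part is exactly $S_\alpha(f)/\bigl(d_\alpha^2(f)+S_\alpha^2(f)\bigr)$, and sends $2\pi i/\lambda_\alpha(f)$ to $i\lambda_\alpha(f)/(2\pi)$, whose imaginary part is $\ell_\alpha(f)/(2\pi)$. The single observation that $d_{\mathbb H}(z_1,z_2)\le D$ forces $|\log(\Im z_1/\Im z_2)|\le D$ then yields the corollary immediately, with the explicit constant $D_1=e^D$. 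Your approach reaches the same conclusion by staying in the original coordinates, at the price of also tracking real parts and splitting into the regimes $d_\alpha(f)\le 2C_1 S_\alpha(f)$ and $d_\alpha(f)>2C_1 S_\alpha(f)$; the M\"obius trick effectively absorbs that case analysis into the geometry of $z\mapsto -1/z$, and as a bonus produces a cleaner constant.
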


\begin{proof}
Using Theorem \ref{th:ELC} together with the $\mathbb{H}$ isometry $z \to -1/z$, we have
\[
d_{\mathbb{H}} \left (\frac{-1}{\omega_\alpha(f)}, \frac{i \cdot \lambda_\alpha(f)}{2\pi} \right ) \le D.
\]
However, if $d_{\mathbb{H}}(z_1,z_2) \le D$, then $|\log (\frac{\Im z_1}{\Im z_2})| \le D$. 
Since $\Im (-1/\omega_{\alpha}(f)) = S_\alpha / (d_\alpha^2(f) + S_\alpha^2(f))$ and  
$\Im(i \cdot \lambda_\alpha(f) / 2\pi) =  \ell_\alpha(f)/2\pi$,
setting $D_1 = e^D$ completes the proof.
\end{proof}

\subsection*{Proof of Theorem \ref{thm:shortest_geo}}
We break the proof into $3$ steps:

 \textbf{Step 1:} There is a constant $C_3>0$, depending only on $S$, such that with probability going to $1$, there is a curve $\alpha_n$ in $S$ with $\ell_{\alpha_n}(w_n) \le \frac{C_3}{\log^2(n)}$.
 
To see this, note that by item $(2)$ of Proposition \ref{prop:pA_sub_growth}, with probability approaching $1$, there is a curve $\alpha_n$ such that $d_{\alpha_n} (w_n) \ge C_2^{-1} \cdot \log(n)$ and $\sup_{Y \in \mathcal{Y}_{\alpha_n}} d_Y(w_n) \le K_2$. By Corollary \ref{cor:useful_ELC}, 
\begin{align} \label{eq:short}
\ell_{\alpha_n}(w_n) \le D_1 \cdot \frac{2\pi}{d^2_\alpha + 1} \le D_1 \cdot \frac{2\pi}{C_2^{-1} \cdot \log^2(n) + 1},
\end{align}
where we have used that $K_3 \ge K_2$. This completes Step $1$.
\\

 \textbf{Step 2:} There is a constant $C_4>0$, depending only on $S$, such that with probability going to $1$, $\ell_{\alpha}(w_n) \ge \frac{C_4}{\log^2(n)}$ for any curve $\alpha$ in $S$.
 
By item $(1)$ of Proposition \ref{prop:pA_sub_growth}, for all subsurface $Y$ of $S$, $\sup_{Y \subsetneq S } \sum_{Z \subset Y} \{\{d_Z(w_n)\}\}_{K_2} \le C_2 \cdot \log(n)$ with probability going to $1$. Hence, with probability going to $1$,
\begin{align*}
\ell_\alpha(w_n) &\ge D_1^{-1} \cdot \frac{2 \pi \: S_\alpha(w_n)}{d_\alpha^2(w_n) + S_\alpha^2(w_n)} \\
&\ge C_2^{-2}D_1^{-1} \cdot \frac{2 \pi }{\log^2(n)}. 
\end{align*}
This finishes Step $2$.
\\
 
\textbf{Step 3:} With probability going to $1$, $\mathrm{sys}(M(w_n))$ is realized by a geodesic $g_n$ that is isotopic to a curve $\beta_n$ in the fiber $S$.

By Step $1$, we know that, with probability approaching $1$, $\mathrm{sys}(M(w_n)) \to 0$. Hence, it suffices to show that any sufficiently short geodesic in a hyperbolic fibered $3$-manifold $M$ with fiber $S$ is isotopic to a curve in $S$. This is an easy consequence of \cite[Lemma 2.1]{BiringerSouto}, but an older argument that uses well-known results in the theory of Kleinian groups was already known to experts. We now sketch the argument.

First, any primitive geodesic $g$ which is sufficiently short (depending only on $S$) must be homotopic into $S$. Otherwise, $g$ meets the image of any map $f \colon S \to M$ homotopic to the fiber. (In fact, $g$ has nonzero algebraic intersection number with $S$.) In particular, $g$ meets the image of a pleated surface $f \colon S \to M$ (see \cite{thurston1998hyperbolic, thurstonnotes}). As in \cite[Section 3.22]{ECL1}, for any $\epsilon_1 \ge0$ there is an $\epsilon_2 <\epsilon_1$ such that
\[
f(S_{[\epsilon_1, \infty)}) \subset M_{[\epsilon_2, \infty)},
\] 
where $N_{[\epsilon,\infty)}$ is the $\epsilon$-thick part of $N$, i.e. where the injectivity radius is greater than $2\epsilon$. Fix $\epsilon_1$ less than the $3$-dimensional Margulis constant. (See \cite{thurstonnotes}).
If $g$ has length less than $\epsilon_2$, then by the thick-thin decomposition of $S$, there is a closed curve $\gamma$ on $S$ with length less than $\epsilon_1$ whose image in $M$ meets $g$. The Margulis Lemma \cite[Chapter 5.10.1]{thurstonnotes} implies that $g$ and $\gamma$ (at their intersection point) generate a virtually cyclic subgroup $\pi_1(M)$. This, however, contradicts that $\gamma$ and $g$ cannot be homotopic up to powers.

Now since $g$ is homotopic into $S$, it lifts to the cover $\widetilde M$ corresponding to $S$. Since $\widetilde M \cong S \times \R$, a theorem of Otal \cite{otal1995nouage} implies that if $g$ is sufficiently short, then it is isotopic to a curve $\beta$ in $S$. This completes the proof of Step $3$ and of the theorem.
\qed

\bibliographystyle{abbrv}
\bibliography{large_proj.bbl}
\end{document}